\newcommand{\qtq}[1]{{\quad\text{#1}\quad}}
\theoremstyle{plain} 
\newtheorem{thm}{Theorem}[section]
\newtheorem{cor}[thm]{Corollary}
\newtheorem{lem}[thm]{Lemma}
\newtheorem{prop}[thm]{Proposition}
\newtheorem{defn}[thm]{Definition}
\theoremstyle{remark}
\newtheorem{rem}[thm]{Remark}
\numberwithin{equation}{section}
\def\f{\frac}
\def\vi{\varphi}
\def\({\left(}
\def \){ \right)}
\def\Bl{\Bigl}
\def\Br{\Bigr}
 \def\ee{{\textnormal{e}}}
 \def\tr{{\triangle}}
\def\ta{\theta}
\def\da{{\delta}}
\def\sa{{\sigma}}
 \def\va{\varepsilon}
\def\EEE{{\mathcal E}}
 \def\NN{{\mathbb N}}
 \def\RR{{\mathbb R}}
 \def\SS{{\mathbb S}}
  \def\dim{\operatorname{dim}}
  \def\sph{\mathbb{S}^{d-1}}
\def\Og{\Omega}
\newcommand{\wt}{\widetilde}
\newcommand{\wh}{\widehat}
\newcommand{\spn}{{\rm span}}
\def\p{\partial}
\def\og{\omega}
\newcommand{\R}{{\mathbb{R}}}
\newcommand{\eps}{\varepsilon}
\newcommand{\diam}{{\rm diam}}
\def\be{\begin{equation}}
\def\ee{\end{equation}}
\def\CS{\mathcal{S}}
 \def\CE{\mathcal{E}}
 \def\cN{\mathcal{N}}
\begin{document}

\title[]{On directional Whitney inequality}
\author{Feng Dai}
\address{Department of Mathematical and Statistical Sciences\\
	University of Alberta\\ Edmonton, Alberta T6G 2G1, Canada.}
\email{fdai@ualberta.ca}

\author{Andriy Prymak}
\address{Department of Mathematics, University of Manitoba, Winnipeg, MB, R3T2N2, Canada}

\email{prymak@gmail.com}

\thanks{	The first author was supported by  NSERC of Canada Discovery
	grant RGPIN-2020-03909, and the second author  was supported by NSERC of Canada Discovery grant RGPIN-2020-05357.
	}


\keywords{Whitney-type inequality, directional modulus of smoothness, convex domains, $C^2$-domains, multivariate polynomials, illumination of convex bodies, X-ray number, illumination number}
\subjclass[2020]{41A10, 41A25, 41A63, 52A20, 52A40}

\begin{abstract}
This paper  studies  a  new   Whitney type inequality on a compact domain $\Og\subset \RR^d$   that takes the form
$$\inf_{Q\in \Pi_{r-1}^d(\CE)} \|f-Q\|_p \leq C(p,r,\Og) \og_{\CE}^r(f,\diam(\Og))_p,\  \ r\in \NN,\  \ 0<p\leq \infty,$$
where    $\og_{\CE}^r(f, t)_p$ denotes  the $r$-th order directional modulus of smoothness of $f\in L^p(\Og)$  along   a finite  set  of directions $\CE\subset \sph$ such that $\spn(\CE)=\RR^d$,  $\Pi_{r-1}^d(\CE):=\{g\in C(\Og):\  \og^r_\CE (g, \diam (\Og))_p=0\}$.
 We prove  that  there does not exist  a universal   finite set  of directions $\CE$ for which this   inequality holds on  every convex body $\Og\subset \RR^d$, but     for every  connected $C^2$-domain $\Og\subset \RR^d$, one can choose $\CE$ to be   an arbitrary  set   of $d$ independent directions.
  We also study the smallest number $\cN_d(\Og)\in\NN$ for which there exists a set  of $\cN_d(\Og)$ directions $\CE$  such that $\spn(\CE)=\RR^d$ and  the directional Whitney inequality holds on $\Og$ for all $r\in\NN$ and $p>0$.
It is proved that $\cN_d(\Og)=d$ for every connected $C^2$-domain $\Og\subset \RR^d$, for $d=2$ and every planar convex body $\Og\subset \RR^2$, and  for $d\ge 3$ and every almost smooth convex body $\Og\subset \RR^d$.  For $d\ge 3$ and a more general convex body $\Og\subset \RR^d$,  we  connect $\cN_d(\Og)$  with a problem in convex geometry on   the X-ray number of $\Og$, proving  that if  $\Og$ is  X-rayed by a finite set  of directions $\CE\subset \sph$, then $\CE$ admits  the directional Whitney inequality  on $\Og$ for all $r\in\NN$ and $0<p\leq \infty$. Such a connection
  allows us to deduce certain  quantitative  estimate of $\cN_d(\Og)$ for $d\ge 3$.

A slight modification of  the proof  of the usual Whitney inequality in literature  also yields  a directional Whitney inequality on each convex body $\Og\subset \RR^d$, but with the set $\CE$ containing more than $(c d)^{d-1}$ directions. In this paper, we develop a new and  simpler  method to prove the directional Whitney inequality  on  more general, possibly non-convex domains requiring  significantly fewer directions  in the directional moduli.

\end{abstract}

\maketitle

%

\section{Introduction}

\subsection{Definitions and notations}

 Let $\sph:=\{x\in\RR^d:  \|x\|=1\}$ denote  the unit sphere of $\RR^d$. Here and throughout the paper, $\|\cdot\|$ denotes the Euclidean norm.  Given $x\in\RR^d$ and $r>0$, let
 $B_r(x):=\{y\in\RR^d:\  \|x-y\|< r\}$ and $B_r[x]:=\overline{B_r(x)}$, where $\overline{\Og}$ denotes the closure of $\Og\subset\RR^d$. A convex body in $\RR^d$ is a compact convex subset of $\RR^d$ with  nonempty interior.
Let  $\Og$ be  a nonempty bounded measurable  set in $\RR^d$.   We denote by  $L^p(\Og)$, $0<p<\infty$ the usual Lebesgue $L^p$-space  on  $\Og$. In the limiting  case we set $L^\infty(\Og)=C(\overline {\Og})$, the space of all continuous functions  on $\overline {\Og}$ equipped with the uniform norm.

The  $r$-th order  directional  modulus of smoothness of $f\in L^p(\Og)$ in the  direction of  $\xi\in\sph$ is defined   by
$$\og_\xi^r(f, t)_{L^p(\Og)}:= \sup_{|u|\leq t} \|\tr_{ u\xi}^r f\|_{L^p(\Og_{r, u\xi})},\   \ t>0,  \  \ 0<p\leq \infty,$$
where $\Og_{r,h}:=\{ x\in\Og:  \  x, x+h, \dots, x+rh\in \Og\}$, and
$$ \tr_h^r f(x) :=\sum_{j=0}^r (-1)^{r+j} \binom{r} {j} f(x+j h ),\  \ x\in \Og_{r,h},\  \ h\in \RR^d\setminus\{0\}. $$
Given a set of  $\mathcal{E}\subset \sph$ of directions,  define
$$ \og_{\CE} ^r (f,  t)_{L^p(\Og)}:=\sup_{\xi\in \mathcal{E}} \og_\xi^r(f, t)_{L^p(\Og)}\   \ \text{and}\   \  \og_{\CE} ^r(f; \Og)_p:=
\og^r_{\CE} (f,  \diam(\Og))_{L^p(\Og)},$$
where $\diam (\Og) :=\sup_{\xi, \eta \in \Og} \|\xi-\eta\|$.
In the case of $\CE=\sph$, we write $
\og^r(f,t)_p:= \og^r_{\sph} (f,t)_p$.

 Let $\Pi_n^d$ denote the space of all real  algebraic polynomials in $d$ variables of total degree at most $n$. For $f\in L^p(\Og)$ and $0<p\leq \infty$, we define
 $$ E_r(f)_{L^p(\Og)}:=\inf\Bl\{ \|f-q\|_p:\   \  q\in \Pi^d_{r-1}\Br\},\   \ r=1,2,\dots. $$
 
\subsection{Whitney inequalities}
 
 The usual    Whitney type  inequality
 deals with  approximation of a function $f$  on a connected compact  domain   $\Og\subset \RR^d$ by polynomials of total degree $<r$, with the error $E_{r}(f)_{L^p(\Og)}$ estimated in terms of the modulus of smoothness $\og^r(f; \diam(\Og))$.  It   takes  the form
 \begin{equation}\label{whitney}
 E_r(f)_{L^p(\Og)} \leq C \og^r(f, \diam(\Og))_p,\   \ \forall f\in L^p(\Og),
 \end{equation}
  and  allows one to obtain  good   approximation of $f$  for each fixed $r$ if  $\diam(\Og)$ is small.
  Note that  then the Whitney inequality \eqref{whitney} is,  in fact, an equivalence
  since   $\og^r (Q; q)_p=0$ for each  $q\in\Pi^{d}_{r-1}$,  and
 \[
 \og^r (f; \Og)_p=\inf_{q\in\Pi_{r-1}^d} \og^r (f-q; \Og)_p\le C_{r,p} E_{r}  (f)_{L^p(\Og)}.
 \]

 In many  applications,  it  is  very important to have certain  quantitative  estimates of   the smallest constant $C$ for which   \eqref{whitney}  holds. Such a constant is  called the   Whitney  constant and  is denoted by $w_r(\Og)_p$. Thus,
 \begin{equation}
 w_r(\Og)_p:= \sup\Bl \{ E_r(f)_{L^p(\Og)}:\  f\in L^p(\Og) \ \text{and}\   \ \og^r(f; \Og)_p\leq 1\Br\}.
 \end{equation}
 It can be easily seen that for each  non-singular affine transformation $T: \RR^d\to\RR^d$,
\begin{equation}
w_r(\Og)_p=w_r (T(\Og))_p.
\end{equation}

 The Whitney inequality  had been studied extensively in various settings  in literature.
  In the case of one variable, the classical  Whitney theorem asserts  that  $w_r([0,1])_p \leq C_{p,r}<\infty$ for all $r\in\NN$ and $0<p\leq \infty$.  This result was first proved by   Whitney \cite{Wh} for $p=\infty$, extended   by   Yu.~Brudnyi \cite{Bru} to   $1\leq p<\infty$, and by Storozhenko \cite{Stor} to $0<p<1$.
   For $p=\infty$,  Yu.~Brudnyi \cite{Bru} also  proved the estimate $w_r([0,1])_\infty \leq C r^{2r}$ for all $r\in \NN$. This estimate was subsequently improved by a research team (K. Ivanov, Binev, and Takev) headed by Sendov~\cite{Sen}  who finally showed that $w_r([0,1])_\infty \leq 6$   (see~\cite{IT, GKS, Sen} and the references therein).  The best known result in this direction is due to Gilewicz, Kryakin and
  Shevchuk \cite{GKS}, who proved that $w_r([0,1])_\infty \leq 2+e^{-2}$ for all $r\in\NN$.

 A multivariate  extension of the Whitney inequality,  proved by Yu.~Brudnyi \cite{Bru2} for $p\geq 1$ and by Dekel and Leviatan \cite{De-Le} for the full range of $0<p\leq \infty$,  asserts that
  $ w_r(\Og)_p\leq C(p, d, r)<\infty$ for all $r\in \NN$ and every convex body  $\Og\subset \RR^d$. A similar estimate with the constant depending on certain geometric parameters of the domain $\Og$,   was obtained earlier  by Yu.~Brudnyi \cite{Bru2} for $p\ge 1$.
      In the case of  $p=\infty$,  Yu.~Brudnyi and Kalton \cite{Bru-Kalton} obtained some  sharp   estimates of the Whitney constant $w_r(\Og)_\infty$ for  a convex body $\Og\subset \RR^d$    in terms of the dimension $d$.
 For more results on multivariate  Whitney inequalities, we refer to  \cite{Bre-Scot,Di96, DU, BPR}.

\subsection{Directional Whitney inequalities}

The   main purpose  in this paper  is to establish  a proper generalization of the   Whitney   inequality \eqref{whitney}  for  directional  moduli of smoothness along a finite number of directions  on a possibly non-convex domain $\Og$.  Such a generalization is motivated by a recent work of  the current authors
 in an upcoming paper, where we establish both  a direct Jackson inequality and its matching inverse for  a  new  directional modulus of smoothness on a  compact $C^2$-domain, and the usual   Whitney   inequality  is  not  enough for our purposes.

The   generalization requires approximation of  functions  from a  space $\Pi^{d}_{r-1}(\mathcal{E})$ that is larger than $\Pi_{r-1}^d$ and depends  on the set $\CE\subset \sph$ of selected directions.
To see this, assume that $\Og$ is the closure of a connected open set.  Then   a function $f\in L^p(\Og)$ belongs to  the space $\Pi_{r-1}^d$ if and only if  $\og^r(f; \Og)_p =0$. However,  for   directional moduli,  the equality  $\og_{\CE}^r (f;  \Og)_p=0$ may hold   for functions $f\notin \Pi_{r-1}^d$, and therefore,  the inequality
$$E_r(f)_{L^p(\Og)} \leq C \og^r_{\CE}(f; \Og)_p$$
is in general not correct.
 For example, if  $e_1:=(1,0,\dots, 0)$, $\dots, e_{d} =(0,\dots, 0,1)\in\RR^{d}$ and
$\CE=\{e_1,\dots, e_d\}$, then $\og^r(f; \Og)_p=0$ if and only if $f$ is a polynomial of degree $<r$ in each variable.

\begin{defn}\label{def-1-1}
	Given  a set of directions $\mathcal{E}\subset\SS^{d-1}$,  we define $\Pi^{d}_{r-1}(\mathcal{E})$  to be  the set of all real continuous  functions  $Q$ on $\RR^{d}$ such that for each  fixed $x\in\RR^{d}$ and $\xi\in\EEE$,  the function $g(t):= Q(x+t\xi)$ is an algebraic polynomial of degree $< r$ in the variable $t\in\RR$.
\end{defn}

Clearly,  $\Pi_{r-1}^d(\CE)$ is a linear subspace of $C(\Og)$. Let us now state certain basic properties of this subspace in the following proposition which will be proved in Section~\ref{sec:prliminary}.
\begin{prop}\label{pro:properties}
	(i) If  $\text{span}(\mathcal{E})\not=  \RR^{d}$, then $\dim\Pi_{r-1}^d(\CE)=\infty $. \\
	(ii) If  $\text{span}(\mathcal{E})=  \RR^{d}$ i.e. $\CE$ contains some $d$ linearly independent directions, then $\Pi^{d}_{r-1}(\mathcal{E})$ is a finite dimensional space of algebraic polynomials depending on $\CE$. Moreover, $\dim\Pi_{r-1}^d(\CE)\le r^d$ and $\Pi^{d}_{r-1}(\mathcal{E})\subset \Pi^{d}_{d(r-1)}$, i.e. the monomials in any element from $\Pi^{d}_{r-1}(\mathcal{E})$ always have total degree not exceeding $d(r-1)$.
\end{prop}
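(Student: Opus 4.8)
For part (i), suppose $\spn(\CE)\neq\RR^d$. Then there is a nonzero linear functional $\ell$ on $\RR^d$ vanishing on every $\xi\in\CE$; equivalently, every $\xi\in\CE$ lies in the hyperplane $H=\ker\ell$. The plan is to exhibit infinitely many linearly independent elements of $\Pi_{r-1}^d(\CE)$. Take any nonpolynomial continuous $\phi\colon\RR\to\RR$ (or, to get a genuinely infinite-dimensional family, the functions $\phi_k(s)=\cos(ks)$, $k\in\NN$, or $\phi_k(s)=e^{ks}$) and set $Q_k(x):=\phi_k(\ell(x))$. For fixed $x$ and $\xi\in\CE$ we have $Q_k(x+t\xi)=\phi_k(\ell(x)+t\,\ell(\xi))=\phi_k(\ell(x))$, a constant in $t$, hence a polynomial of degree $<r$. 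So each $Q_k\in\Pi_{r-1}^d(\CE)$, and since $\ell\not\equiv 0$ the $Q_k$ are linearly independent; therefore $\dim\Pi_{r-1}^d(\CE)=\infty$.

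For part (ii), assume $\xi_1,\dots,\xi_d\in\CE$ are linearly independent, and let $T\colon\RR^d\to\RR^d$ be the linear map sending the standard basis $e_i$ to $\xi_i$. The idea is to reduce to the coordinate-direction case by a change of variables. If $Q\in\Pi_{r-1}^d(\CE)$ then $\widetilde Q:=Q\circ T$ satisfies: for each $x$ and each $i$, $\widetilde Q(x+te_i)=Q(Tx+t\xi_i)$ is a polynomial of degree $<r$ in $t$. Fixing all but one variable and invoking a one-variable fact (a continuous function that is, along each axis, a polynomial of degree $<r$ — applied repeatedly, using that the mixed finite difference $\Delta_{he_1}^r\cdots$ vanishes, or simply Lagrange interpolation in each variable on a grid of $r$ points), one concludes that $\widetilde Q$ is a polynomial whose degree in each variable is $<r$; in particular $\widetilde Q\in\Pi^d_{d(r-1)}$ and the space of such $\widetilde Q$ has dimension at most $r^d$ (the tensor-product polynomials spanned by $x_1^{a_1}\cdots x_d^{a_d}$ with $0\le a_i\le r-1$). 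Transporting back by $T^{-1}$, which is linear and hence preserves total degree and dimension, gives $\Pi_{r-1}^d(\CE)=\widetilde{\Pi}\circ T^{-1}\subset\Pi^d_{d(r-1)}$ with $\dim\Pi_{r-1}^d(\CE)\le r^d$.

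The one mildly delicate point — the main obstacle — is the step asserting that a continuous function on $\RR^d$ that restricts to a polynomial of degree $<r$ along every line parallel to a coordinate axis is itself a (tensor-product) polynomial of coordinate-degree $<r$. I would prove this by induction on $d$: apply the $(r-1)$-st order divided difference in the variable $x_d$ at $r$ fixed nodes to express $\widetilde Q(x',x_d)$ as a Lagrange interpolation polynomial in $x_d$ whose coefficients are linear combinations of the continuous functions $x'\mapsto\widetilde Q(x',c_j)$; each such coefficient function is continuous on $\RR^{d-1}$ and still polynomial of degree $<r$ along each coordinate axis, so the inductive hypothesis applies. (For $p<\infty$ one first notes $Q$ is continuous by the definition of $\Pi_{r-1}^d(\CE)$, so there is no measure-theoretic subtlety.) Everything else is routine linear algebra and the affine-invariance already recorded in the excerpt.
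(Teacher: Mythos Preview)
Your proof is correct and follows essentially the same strategy as the paper: ridge functions $g(x\cdot\xi_0)$ for part~(i), and an affine change of variables reducing to coordinate directions for part~(ii). The only difference is that for the key step in (ii)---showing that a continuous function which is a polynomial of degree $<r$ along each coordinate axis must be a tensor-product polynomial---the paper simply cites \cite[Lemma~2.1]{Di96}, whereas you supply a clean self-contained argument via Lagrange interpolation in one variable and induction on $d$; your version is more elementary and avoids the external reference, at the cost of a short extra paragraph.
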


In this paper, unless otherwise stated, we will always  assume that $\spn(\CE)=\RR^d$ so that $\dim \Pi_{r-1}^d(\CE) <\infty$.

\begin{rem}\label{rem-1-3}
By the definition,  if $\Og$ is the closure of a connected open set, then a function $g\in L^p(\Og)$ belongs to the space $\Pi_{r-1}^d(\CE)$ if and only if
	$ \og_{\CE}^r(g; \Og)_p=0.$
\end{rem}

Now  given a set of directions $\CE\subset \sph$, we define the best approximation of $f\in L^p(\Og)$ by functions  from the space $\Pi_{r-1}^d(\CE)$ in the $L^p$-metric by
\[
E_{r}(f;\EEE)_{L^p(\Og)}:=\inf\Bl\{ \|f-Q\|_{L^p(\Og)}:\   \  Q\in \Pi^{d}_{r-1}(\EEE)\Br\},\  \ 0<p\leq \infty.
\]
 By Remark ~\ref{rem-1-3},  we have
 $$   \og^r_{\CE}  (f; \Og)_p\leq C_{p,r} E_{r}  (f;\EEE)_{L^p(\Og)}.   $$
  Thus, an  appropriate generalization of the Whitney inequality  for directional moduli of smoothness     takes  the form
\begin{equation}\label{eqn:new whitney type}
E_{r}  (f;\EEE)_{L^p(\Og)} \leq C \og_{\CE}^r (f, \Og)_p,
	\end{equation}
	where $C>0$ is a constant independent of $f$.

For $r\in\NN$,  $0<p\leq \infty$ and a nonempty set  $\CE\subset \sph$, we define the directional  Whitney constant by
\begin{equation}
w_r(\Og;\CE)_p:= \sup\Bl \{ E_r(f;\CE)_{L^p(\Og)}:\  \ f\in L^p(\Og),\   \ \og_{\CE}^r(f; \Og)_p\leq 1\Br\}.
\end{equation}
We  say a set $\CE\subset \sph$ of directions admits a  directional Whitney inequality on a domain $\Og\subset \RR^d$ if $w_r(\Og; \CE)_p<\infty$ for all $p>0$ and $r\in\NN$.

Clearly, $w_r(\Og)_p=w_r(\Og;\sph)_p$, and \begin{equation}\label{1-6-00}
w_r(\Og;\CE)_p=w_r(\Og;\CE\cup(-\CE))_p.
\end{equation}
 Furthermore,  it can  be easily verified   that
 for each  non-singular affine transformation  $Tx=Ax+x_0$, $ x, x_0\in\RR^d$, $A\in\RR^{d\times d}$, we have
\begin{equation}\label{1-6a}
w_r(\Og; \CE)_p=w_r (T(\Og); \CE_T)_p,\  \ \text{where $\CE_T:=\{ A^{-1}\xi/\|A^{-1} \xi\|:\  \ \xi\in\CE\}$.}
\end{equation}

In this paper, we will develop a new  method which allows us  to deduce a directional  Whitney  inequality~\eqref{eqn:new whitney type} on a    domain $\Og\subset \RR^d$   from a directional  Whitney  inequality  on  a geometrically     simpler subdomain.   

\begin{rem} Ditzian and Ivanov ~\cite{Di-Iv} studied  the equivalence of the moduli of smoothness  $\omega^r(f,t)_{L^p(\Og)}$ and   $\omega_{\CE}^r(f,t)_{L^p(\Og)}$  for $p\ge1$. They proved that  under certain conditions on  the set $\CE$ of directions and the domain $\Og$,
	\begin{equation}\label{1-10}
	C^{-1} \omega^r(f,t)_{L^p(\Og)}\leq \omega_{\CE}^r(f,t)_{L^p(\Og)}\leq C \omega^r(f,t)_{L^p(\Og)},\  \ 0<t<1,\  \ p\ge 1,
	\end{equation}
	where $C>0$ is a constant independent of $t$ and $f$. This result in particular implies $\Pi^d_{r-1}(\EEE)=\Pi^d_{r-1}$.   However,  	 such an  equivalence is not applicable to the case of $p<1$, and  often  requires a larger number  of directions.  For example, for  a cube in $\RR^d$,  the cone-type conditions in~\cite{Di-Iv} may require  $\ge c 2^d$  different directions in $\EEE$. Our results in this paper show that the directional Whitney inequality \eqref{eqn:new whitney type} normally  requires fewer directions.

\end{rem}

\subsection{Organization of the paper}

This paper is organized as follows.  Several  preliminary results are proven in Section~\ref{sec:prliminary}.  A key  lemma in this section asserts  that
if    $(K, J)$  is   a pair of measurable subsets of $\RR^d$ satisfying that
$\bigcup_{j=1}^r (J-j h) \subset  K$    for some $h\in\RR^d\setminus\{0\}$ such that   $h/\|h\|\in\CE\subset \sph$, then
\begin{align}\label{5-2-eq-18-0}
w_r(K\cup J; \CE)_p^\ta \leq  1+ 2^r w_r (K; \CE)_p^\ta,\   \  0<p\leq \infty,\  \ \ta:=\min\{p,1\}.
\end{align}
This  estimate  can be applied   iteratively    to obtain  a directional  Whitney  inequality on a  general connected    domain  from a known  Whitney inequality  on a geometrically   simpler subdomain (e.g., the rectangular box). The idea here plays a crucial role in later sections.

In Section ~\ref{sec:4}, to illustrate the idea of our method,  we  generalize   the ordinary  Whitney inequality   to  a  class   of more general, possibly non-convex compact  domains.
Our proof  is simper than that in \cite{De-Le}.

In Section ~\ref{sec:5}, we prove that a set  $\CE\subset \sph$ with $\spn(\CE)=\RR^d$  is a universal set of directions that admits the directional Whitney inequality on every convex body in $\RR^d$ if and only if $\overline{\CE\cup (-\CE)}= \sph$. This means that  one has to take the domain into consideration when choosing the directions for the Whitney inequality. We also prove in  Section ~\ref{sec:5} that  if  $\CE_0\subset \sph$ is a set of directions that admits the directional Whitney inequality on a domain $\Og\subset \RR^d$, so is any larger set $\CE$ of directions containing $\CE_0$.

In Section ~\ref{sec:6},  we prove that  every set of $d$ linearly independent directions is a universal set of directions that  admits the directional Whitney inequality  on all  connected $C^2$-domains in $\RR^d$.

Finally, in Section ~\ref{sec:7},
 we study    the smallest number $\cN_d(G)\in\NN$ for each given  convex body $G\subset \RR^d$ such that there exists a set $\CE$ of $\cN_d(G)$ directions with $\spn(\CE)=\RR^d$ for which $w_r(G; \CE)_p<\infty$ for all $p>0$ and $r\in\NN$.  For $d=2$, we prove that $\cN_2(G)=2$. For $d\ge 3$,
   we connect  the number $\cN_d(G)$ with a problem in convex geometry on the  X-raying number $X(G)$  of $G\subset \RR^d$, proving  that if a convex body $G\subset \RR^d$ is  X-rayed by a finite set  of directions $\CE\subset \sph$, then $w_r(\Og; \CE)_p <\infty$ for all $r\in\NN$ and $0<p\leq \infty$. This,  in particular,  implies that $\cN_d(\Og)\leq X(\Og)$. The connection  also   allows us to show that $\cN_d(K)=d$ for a class of almost smooth convex bodies $K\subset \RR^d$, and to obtain  certain quantitative upper estimates of the number $\cN_d(G)$ for more general convex bodies  $G\subset \RR^d$.  The problem considered in this section may have  potential applications  in approximation of large data.

\section{Preliminary results}\label{sec:prliminary}

We start with the proof of Proposition~\ref{pro:properties}.

\begin{proof}[Proof of Proposition~\ref{pro:properties}]
	(i) If $\text{span}(\mathcal{E})\neq  \RR^{d}$, then there exists   $\xi_0\in \SS^{d-1}$    such that $\xi_0\cdot \xi=0$ for all $\xi\in\mathcal{E}$. This implies that    every ridge function of the form $f(x):=g(x\cdot \xi_0)$ with  $g\in C(\RR)$ is contained  in the space  $\Pi^{d}_{r-1}(\EEE)$, and thus $\dim\Pi_{r-1}^d(\CE)=\infty $.\\
	(ii) Suppose $\text{span}(\mathcal{E})=\RR^{d}$ and   $\{\xi_1,\dots, \xi_d\}\subset\CE$ is a set of $d$ linearly independent directions. It is easy to see that the statement we need to prove is invariant under any affine change of variables (in particular, the space $\Pi^{d}_{d(r-1)}$ is), so we can assume that $\xi_j$ is the $j$-th basic unit vector. Then for any $f\in \Pi^{d}_{r-1}(\EEE)$ the directional modulus of smoothness $\omega^r_\CE(f;\Omega)_\infty$ is zero for $\Omega$ being any compact parallelepiped with sides parallel to the coordinate axes, so by~\cite[Lemma 2.1]{Di96} we obtain that $f$ belongs to the space of polynomials of degree $<r$ in each of the $d$ variables. This space has dimension $r^d$ and is clearly a subspace of $\Pi^{d}_{d(r-1)}$.
\end{proof}

We will also need the lemma we just used in a somewhat more general form.

\begin{lem}\label{lem-5-3-0}\cite[Lemma 2.1]{Di96}
Let  $S$ be  a   compact 	parallelepiped in $\RR^{d}$, and let $\mathcal{E}_S\subset \sph$  denote   the set of all edge directions of $S$. Then
$$ w_r(S; \CE_S)_p\leq C(p, d, r)<\infty,\   \  r\in\NN,\   \  0<p\leq \infty. $$
\end{lem}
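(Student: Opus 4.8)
The plan is to reduce to the unit cube by affine invariance and then strip off the coordinate variables one at a time using the one--dimensional Whitney theorem. By~\eqref{1-6a}, $w_r(\cdot\,;\cdot)_p$ transforms correctly under affine maps and the induced action on direction sets, and an affine image of a parallelepiped is a parallelepiped whose edge directions are the (renormalized) images of the original ones; so we may assume $S=[0,1]^d$ and $\CE_S=\{e_1,\dots,e_d\}$. Directly from Definition~\ref{def-1-1}, every polynomial of degree $<r$ in each variable separately lies in $\Pi^d_{r-1}(\CE_S)$, so it suffices to produce, for each $f$, such a polynomial $Q$ with $\|f-Q\|_{L^p([0,1]^d)}\le C(p,d,r)\,\og^r_{\CE_S}(f;[0,1]^d)_p$.

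I would prove this by induction on $d$, the base case $d=1$ being the classical univariate Whitney theorem (\cite{Wh}, \cite{Bru}, \cite{Stor}). For the inductive step write $x=(x_1,x')$ with $x'\in[0,1]^{d-1}$, and (choosing the selection measurably in $x'$) let $P_1(\cdot,x')\in\Pi_{r-1}$ realize $E_r(f(\cdot,x'))_{L^p[0,1]}$; thus $P_1$ has degree $<r$ in $x_1$. With $\ta=\min\{p,1\}$, the crux is the estimate
\[
\|f-P_1\|_{L^p([0,1]^d)}\le C(p,r)\,\og^r_{e_1}(f;[0,1]^d)_p .
\]
Here the univariate Whitney theorem applied on each slice together with Fubini gives $\|f-P_1\|_{L^p}^{\ta}\le C\int_{[0,1]^{d-1}}\og^r(f(\cdot,x'),1)_{L^p[0,1]}^{\ta}\,dx'$, and one then invokes the one--dimensional fact that the full modulus is dominated by the contribution of a fixed band of small scales,
\[
\og^r(g,1)_{L^p[0,1]}^{\ta}\le C(r)\int_{c_1(r)}^{c_2(r)}\|\tr_{u}^r g\|_{L^p([0,1]_{r,u})}^{\ta}\,du ,\qquad 0<c_1(r)<c_2(r)<\tfrac1r ,
\]
so that a second application of Fubini replaces the problematic $\int_{x'}\sup_u$ by a $u$--average of $\int_{x'}\|\tr_{ue_1}^r f(\cdot,x')\|_{L^p}^{p}\,dx'=\|\tr_{ue_1}^r f\|_{L^p([0,1]^d)}^p\le\og^r_{e_1}(f;[0,1]^d)_p^p$. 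With this in hand, the $\ta$--triangle inequality and $\|\tr_{ue_i}^r(f-P_1)\|_{L^p}^{\ta}\le 2^r\|f-P_1\|_{L^p}^{\ta}$ give $\og^r_{e_i}(P_1;[0,1]^d)_p^{\ta}\le\og^r_{e_i}(f;[0,1]^d)_p^{\ta}+2^r\|f-P_1\|_{L^p}^{\ta}$ for $2\le i\le d$; expanding $P_1$ in the basis $\{x_1^k\}_{k<r}$ (and using that on $\sspan\{1,x_1,\dots,x_1^{r-1}\}$ the $L^p[0,1]$ quasi-norm is equivalent to the coefficient norm, with constants depending only on $p,r$) reduces the remaining $d-1$ directions to the inductive hypothesis applied to the coefficient functions, and assembling everything with the $\ta$--triangle inequality closes the induction.

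I expect the one--dimensional band--of--scales inequality to be the real obstacle in the range $0<p<1$. For $p\ge1$ it is classical --- essentially a comparison of $g$ with a Steklov average --- and there one could even avoid slicewise best approximation entirely by using a fixed bounded linear near--best (de la Vall\'ee Poussin--type) operator in the variable $x_1$. For $0<p<1$, however, no such operator exists: a bounded linear operator into the finite--dimensional space $\Pi_{r-1}$ would be of finite rank, hence zero, since $(L^p[0,1])^*=\{0\}$; and the inequality has to be drawn instead from the Marchaud inequality for $\og^r$ in $L^p$, the delicate part of the sub--additive univariate theory. This is precisely what is established in~\cite[Lemma 2.1]{Di96}, which we therefore invoke directly; it is also why the later sections, which build on this lemma through the gluing estimate~\eqref{5-2-eq-18-0}, take it as a black box rather than reprove it.
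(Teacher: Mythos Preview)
Your proposal is correct and follows exactly the paper's approach: reduce an arbitrary parallelepiped to the standard cube via the affine invariance~\eqref{1-6a}, and then invoke \cite[Lemma~2.1]{Di96} for the rectangular box case. The paper's proof is literally those two sentences; your additional sketch of the inductive argument behind the cited lemma (slicewise univariate Whitney plus the band-of-scales/averaging trick, and the obstruction to linear near-best operators for $p<1$) is accurate commentary but not required, since both you and the paper ultimately treat the cube case as a black box from~\cite{Di96}.
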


\begin{proof}  Lemma~\ref{lem-5-3-0} was proved in \cite[Lemma 2.1]{Di96} in the case when $S$ is a rectangular box. The  more  general  case follows from~\eqref{1-6a}.
\end{proof}

Our second lemma will be used repeatedly in this paper and is the heart of the matter.

\begin{lem} \label{lem-3-3} Let $r\in\NN$ and  $\CE\subset \sph$. Assume that   $(K, J)$  is   a pair of measurable subsets of $\RR^d$ satisfying that  $J\subset  \bigcap_{j=1}^r (K+jh)$  for some  $h\in\RR^d\setminus\{0\}$ such that $h/\|h\|\in\CE$. Then  for any $0<p\leq \infty$,
	\begin{align}\label{5-2-eq-18}
	w_r(K\cup J; \CE)_p^\ta \leq  1+ 2^r w_r (K; \CE)_p^\ta,\   \   \   \  \ta:=\min\{p,1\}.
	\end{align}
\end{lem}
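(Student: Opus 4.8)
The plan is to fix $f\in L^p(K\cup J)$ with $\og_{\CE}^r(f;K\cup J)_p\le 1$ and produce a single $Q\in\Pi_{r-1}^d(\CE)$ whose error on $K\cup J$ is controlled by the right-hand side of~\eqref{5-2-eq-18}. First I would exploit the hypothesis $\og^r_{\CE}(f;K\cup J)_p\le 1$: since $\diam(K\cup J)\ge\diam(K)$ and $h/\|h\|\in\CE$, the modulus of $f$ restricted to $K$ is also at most $1$, so by the definition of the directional Whitney constant there is $Q\in\Pi_{r-1}^d(\CE)$ with
$$\|f-Q\|_{L^p(K)}^\ta\le w_r(K;\CE)_p^\ta+\ve$$
for any $\ve>0$ (the $\ve$ is harmless since we pass to a supremum at the end; if the relevant inf is attained one can drop it). The point of the hypothesis $J\subset\bigcap_{j=1}^r(K+jh)$ is that for every $x\in J$ the whole backward chain $x-h,x-2h,\dots,x-rh$ lies in $K$, so I can \emph{reconstruct} $f$ on $J$ from its values on $K$ up to the finite difference $\tr_{-h}^r$.

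The key algebraic step is the identity, valid for $g=f-Q$ on the set where it makes sense,
$$g(x)=\tr_{-h}^r g(x)-\sum_{j=1}^r(-1)^{r+j}\binom{r}{j}g(x-jh),\qquad x\in J,$$
which follows by isolating the $j=0$ term in the definition of $\tr_{-h}^r$ (note $(-1)^{r}\binom r0=(-1)^r$, matching the sign convention). Here I use that $Q\in\Pi_{r-1}^d(\CE)$ and $-h/\|h\|\in\CE\cup(-\CE)$, so $\tr_{-h}^r Q\equiv 0$ and hence $\tr_{-h}^r g=\tr_{-h}^r f$, whose $L^p(J)$-(quasi-)norm is bounded by $\og^r_{-h/\|h\|}(f,|h|)_p\le\og^r_{\CE\cup(-\CE)}(f;K\cup J)_p=\og^r_{\CE}(f;K\cup J)_p\le 1$ using~\eqref{1-6-00} and $|h|\le\diam(K\cup J)$. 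For each $j=1,\dots,r$ the term $g(\cdot-jh)$ restricted to $J$ is $g$ evaluated on a subset of $K$ (by the inclusion hypothesis), so its $L^p(J)$-norm is at most $\|g\|_{L^p(K)}$; translation invariance of Lebesgue measure is what lets me not worry about Jacobians. Applying the $\ta$-(quasi-)triangle inequality $\|u+v\|_p^\ta\le\|u\|_p^\ta+\|v\|_p^\ta$ to the $r+1$ terms, and bounding $\sum_{j=1}^r\binom r j\le 2^r$ crudely after noting $|a+b|^\ta\le|a|^\ta+|b|^\ta$ absorbs the binomial coefficients into at most $2^r$ unit copies, gives
$$\|g\|_{L^p(J)}^\ta\le 1+2^r\,\|g\|_{L^p(K)}^\ta.$$

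Finally I combine the two pieces: $\|f-Q\|_{L^p(K\cup J)}^\ta\le\|g\|_{L^p(K)}^\ta+\|g\|_{L^p(J)}^\ta\le (1+2^r)\|g\|_{L^p(K)}^\ta+1$, which after reorganizing — or more cleanly by simply writing $\|f-Q\|_{L^p(K\cup J)}^\ta\le\|g\|_{L^p(J)}^\ta+\|g\|_{L^p(K)}^\ta\le 1+(2^r+1)\|g\|_{L^p(K)}^\ta$ — is not quite the claimed $1+2^r w_r(K;\CE)_p^\ta$; so I would instead be slightly more careful and absorb the $K$-term of $g$ into the $2^r$ factor by noting that in the reconstruction one may take $Q$ itself as the competitor on $K$, giving the bound $\|f-Q\|_{L^p(K\cup J)}^\ta\le 1+2^r\|f-Q\|_{L^p(K)}^\ta$ directly, then take the infimum over admissible $Q$ and the supremum over $f$ to land exactly on~\eqref{5-2-eq-18}. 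The main obstacle is this bookkeeping of constants in the quasi-norm regime $p<1$: one must make sure the single polynomial $Q$ works simultaneously on $K$ and $J$, that the binomial coefficients really collapse to the factor $2^r$ rather than $(2^r)^\ta$ or worse, and that the $\diam$-comparisons go the right way; none of this is deep, but it is where the precise statement is won or lost. For $p=\infty$ ($\ta=1$) the same computation goes through with ordinary triangle inequality and $\sum_{j=0}^r\binom rj=2^r$, so no separate argument is needed.
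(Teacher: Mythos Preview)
Your approach is essentially identical to the paper's: isolate the $j=0$ term in $\tr_{-h}^r(f-Q)$ on $J$, use $J-jh\subset K$ to bound the remaining terms by $\|f-Q\|_{L^p(K)}$, and then combine with the $K$-piece. The only wobble is the constant bookkeeping you flag at the end: the clean way to hit exactly $1+2^r w_r(K;\CE)_p^\ta$ is to use the \emph{equality} $\sum_{j=1}^r\binom{r}{j}=2^r-1$ (not the crude bound $\le 2^r$), which gives $\|g\|_{L^p(J)}^\ta\le \|\tr_h^r f\|_{L^p(J-rh)}^\ta+(2^r-1)\|g\|_{L^p(K)}^\ta$, and then $\|g\|_{L^p(K\cup J)}^\ta\le \|g\|_{L^p(K)}^\ta+\|g\|_{L^p(J)}^\ta\le 1+2^r\|g\|_{L^p(K)}^\ta$ --- this is precisely what the paper does.
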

\begin{proof}     Given a function   $F:K\cup J \to \RR$, we have
	\begin{align*}
		(-1)^r\tr_h^r F (\xi-rh)=\tr_{-h}^r  F(\xi) =\sum_{j=0}^r (-1)^j \binom{r} j  F(\xi-jh),\   \ \xi\in J,
	\end{align*}
and thus,
	$$|F(\xi)| \leq |\tr_{h}^r F(\xi-rh)|+  \sum_{j=1}^r\binom{r}j |F(\xi-jh)|,\   \  \forall \xi\in J.$$
Since  $J\subset \bigcap_{j=1}^r (K+jh)$, it follows that
	$$ \|F\|_{L^p(J)}^\ta \leq  \|\tr_{h}^r F\|^\ta_{L^p (J-rh)}  +  (2^r-1)\|F\|^\ta_{L^p(K)},$$
which in turn implies
	\begin{equation}\label{2-3-18-55}
		\|F\|_{L^p(K\cup J)}^\ta \leq  \|\tr_{h}^r F\|^\ta_{L^p (J-rh)}  + 2^r \|F\|_{L^p(K)}^\ta.
	\end{equation}
	Since $\tr_h^r Q=0$ for each $Q\in\Pi_{r-1}^d(\CE)$, using  \eqref{2-3-18-55} with $f-Q$ in place of $F$,  we deduce 	
	\begin{align*}\label{5-2-eq-0-18}
	\|f-Q\|^\ta_{L^p(K\cup J)} &\leq \| \tr_{h}^r f\|^\ta_{L^p({J-rh})}  + 2^r \|f-Q\|^\ta_{L^p(K)},\   \ \forall Q\in\Pi_{r-1}^d(\CE).
	\end{align*}
	It follows that
	 	\begin{align*}
	 	 E_{r} (f;\EEE)_{L^p(K\cup J)}^\ta &\leq   \og_{h/\|h\|}^r (f; K\cup J)^\ta_p+ 2^r E_r(f,\CE)_{L^p(K)}^\ta\\
	 	 & \leq   \og_{h/\|h\|}^r (f; K\cup J)^\ta_p+ 2^rw_r(K;\CE)_p^\ta \og_\CE^r(f; K)_p^\ta\\
	 	 &\leq \og_{\CE}^r (f; K\cup J)^\ta_p\Bl( 1+ 2^rw_r(K;\CE)_p^\ta\Br),
	 	\end{align*}
	 	which  proves \eqref{5-2-eq-18}.
\end{proof}

To   establish a directional  Whitney type inequality   on a   domain $\Og\subset \RR^{d}$ for a set of directions $\CE\subset \sph$,  we   often need to  apply the estimate \eqref{5-2-eq-18}  iteratively.
Indeed, we have  the following useful lemma:

\begin{lem}\label{rem:whitney key tool seq appl}
Let $\CE\subset \sph$ be a set of directions. Assume that  	$\Og\subset \RR^d$ is   a finite union  $\Og=\bigcup_{k=0}^m E_k$ of measurable subsets $E_k\subset \RR^d$ satisfying  that
for each $1\leq k\leq m$,
there exists a vector $h_k\in \RR^d\setminus\{0\}$ whose direction lies in $\CE$ such that
\begin{equation}\label{7-2-0-18}
E_{k} \subset  \bigcap_{j=1}^r (\Og_{k-1}+jh_k).\   \
\end{equation}
where
$\Og_k:=\bigcup_{j=0}^k E_j$, $k=0,1,\dots, m$.
Then
\begin{equation}\label{3-5-0}
w_r(\Og; \CE)_p^\ta \leq 2^{mr} w_r(\Og_0; \CE)_p^\ta +\f { 2^{mr} -1}{2^r-1}.
\end{equation}

\end{lem}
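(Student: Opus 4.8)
The plan is to prove \eqref{3-5-0} by induction on $m$, using Lemma~\ref{lem-3-3} as the inductive step. The base case $m=0$ is trivial since then $\Og=\Og_0$ and the claimed bound reads $w_r(\Og_0;\CE)_p^\ta\le w_r(\Og_0;\CE)_p^\ta$, which holds with room to spare (the second term vanishes as $(2^0-1)/(2^r-1)=0$).

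For the inductive step, suppose the bound \eqref{3-5-0} holds for all admissible decompositions with $m-1$ sets; I would apply it to the domain $\Og_{m-1}=\bigcup_{j=0}^{m-1}E_j$, which is covered by the same hypotheses \eqref{7-2-0-18} for $1\le k\le m-1$. This gives
\[
w_r(\Og_{m-1};\CE)_p^\ta \le 2^{(m-1)r} w_r(\Og_0;\CE)_p^\ta + \f{2^{(m-1)r}-1}{2^r-1}.
\]
Now I would invoke Lemma~\ref{lem-3-3} with $K:=\Og_{m-1}$ and $J:=E_m$: the hypothesis \eqref{7-2-0-18} for $k=m$ says precisely that $E_m\subset\bigcap_{j=1}^r(\Og_{m-1}+jh_m)$ with $h_m/\|h_m\|\in\CE$, so that lemma yields
\[
w_r(\Og_{m-1}\cup E_m;\CE)_p^\ta \le 1 + 2^r w_r(\Og_{m-1};\CE)_p^\ta.
\]
Since $\Og_{m-1}\cup E_m=\Og_m=\Og$, substituting the inductive bound for $w_r(\Og_{m-1};\CE)_p^\ta$ gives
\[
w_r(\Og;\CE)_p^\ta \le 1 + 2^r\Bl(2^{(m-1)r} w_r(\Og_0;\CE)_p^\ta + \f{2^{(m-1)r}-1}{2^r-1}\Br)
= 2^{mr}w_r(\Og_0;\CE)_p^\ta + 1 + \f{2^{mr}-2^r}{2^r-1}.
\]
Finally $1+\f{2^{mr}-2^r}{2^r-1}=\f{2^r-1+2^{mr}-2^r}{2^r-1}=\f{2^{mr}-1}{2^r-1}$, which closes the induction and proves \eqref{3-5-0}.

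There is essentially no hard obstacle here; the only point requiring a little care is verifying that the truncated decomposition $\Og_{m-1}=\bigcup_{k=0}^{m-1}E_k$ genuinely satisfies the hypotheses of the lemma with parameter $m-1$ — but this is immediate, since conditions \eqref{7-2-0-18} for $1\le k\le m-1$ involve only the sets $E_0,\dots,E_{m-1}$ and their partial unions $\Og_0,\dots,\Og_{m-1}$, all of which are unchanged. One should also note that Lemma~\ref{lem-3-3} is stated with the containment written as $J\subset\bigcap_{j=1}^r(K+jh)$, matching \eqref{7-2-0-18} exactly, so no reindexing of the $h_k$ is needed. The arithmetic simplification of the geometric-series constant at the end is the only computation, and it is routine.
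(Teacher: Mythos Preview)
Your proof is correct and follows essentially the same approach as the paper: apply Lemma~\ref{lem-3-3} to the pairs $(K,J)=(\Og_{k-1},E_k)$ for $k=1,\dots,m$ to get the recursion $w_r(\Og_k;\CE)_p^\ta\le 1+2^r w_r(\Og_{k-1};\CE)_p^\ta$, and then solve it. The paper states the recursion and asserts that \eqref{3-5-0} follows, whereas you spell out the induction and the geometric-series arithmetic explicitly.
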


\begin{proof}
	With the decomposition $\Og=\bigcup_{k=0}^m E_k$  , we may apply \eqref{5-2-eq-18}  recursively to the pairs of sets $(K, J)=(\Og_{k-1}, E_k)$, $k=1,2,\dots, m$ to obtain
$$w_r(\Og_k; \CE)_p^\ta \leq 1+ 2^r w_r(\Og_{k-1};\CE)_p^\ta,\   \ k=1,2,\dots, m.$$
 \eqref{3-5-0} then follows.
\end{proof}	

\begin{rem} To apply  Lemma ~\ref{rem:whitney key tool seq appl}  to establish a directional Whitney inequality in a domain,  the crucial   step is  to decompose the domain $\Og$ as a finite union $\Og=\bigcup_{k=0}^m E_k$ so that  the condition \eqref{7-2-0-18} is satisfied.
	With this approach, we often choose   an   initial set  $E_0$ to be geometrically simpler so that its  directional  Whitney constant $w_r(E_0;\CE)_p$ is known to be finite.   For instance, according to  Lemma~\ref{lem-5-3-0},  we may choose $E_0$ to be  any  compact
parallelepiped in $\RR^{d}$.
We need to assume that the  direction of $h_j$ for each $j$
 lies  in the set $\EEE$.    If  too many directions are involved, we may end up with $\Pi^{d}_{r-1}(\EEE)=\Pi^{d}_{r-1}$.

\end{rem}

For later applications, we introduce the following definition.

\begin{defn}\label{def-3-4} Given $\xi\in\sph$, we say $G \subset \RR^{d}$  is  a regular   $\xi$-directional domain with parameter $L\ge 1$      if   there exists a  rotation $\pmb{\rho}\in SO(d)$ such that\begin{enumerate}[\rm (i)]
		\item
	 $\pmb{\rho}(0,\dots, 0,1)=\xi$, and  $ G$ takes the form
	 \begin{equation}\label{3-6}
	 G:=\pmb{\rho}\Bl(\{(x, y):  \  x\in D,\   g_1(x)\leq y\leq g_2(x)\}\Br),
	\end{equation}
where  	$D\subset \RR^{d-1}$ is compact    and  $g_i: D\to\RR$ are measurable;
\item
  there exist an affine function $H:\RR^{d-1}\to\RR$ and a constant $\da>0$  such that   $S\subset G\subset S_L$, where
\begin{align}
\pmb{\rho}^{-1} (S):&=\{(x,y):\  \  x\in D,\  \  H(x)-\da\leq y\leq H(x)+\da\},\label{3-7}\\
\pmb{\rho}^{-1}  (S_L):&= \{(x,y):\  \  x\in D,\  \  H(x)-L\da\leq y\leq H(x)+L\da\}.\label{3-8}
\end{align}  	
	\end{enumerate}	
In this case, we say $S$ is the base of $G$.
\end{defn}

\begin{lem}\label{cor-7-3}
	Let $G\subset \RR^d$ be a regular  $\xi$-directional  domain with parameter $L\ge 1$ and base $S$   as given in Definition~\ref{def-3-4} for some $\xi\in\sph$.  Let  $\CE\subset\sph$ be a set of directions containing $\xi$.  	Assume that   $K$  is  a measurable subset of $\RR^d$ such that $S\subset  K\cap G$   and $w_r(K; \CE)_p<\infty$ for some $r\in\NN$,  $0<p\leq \infty$.
	Then
\begin{equation}\label{3-9-a}
w_r(G\cup K; \EEE)_p\leq C_{p,r} L^{r-1+2/p}(1+ w_r(K; \CE)_p),
\end{equation}
where the constant $C_{p,r}$ depends only on $p$ and $r$.

\end{lem}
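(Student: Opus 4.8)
The plan is to reduce, via a rotation and the affine‑invariance \eqref{1-6a}, to the case $\xi=e_d$, and then prove the estimate by a one‑dimensional slicing argument in the direction $e_d$. Applying the rotation $\pmb{\rho}^{-1}$ and \eqref{1-6a} (with $A=\pmb{\rho}\in SO(d)$, so that $\CE$ is replaced by $\pmb{\rho}^{-1}\CE\ni e_d$ and $L,\da$ are unchanged), we may assume $\xi=e_d$, so that $G=\{(x,y):x\in D,\ g_1(x)\le y\le g_2(x)\}$ with $S=\{(x,y):x\in D,\ H(x)-\da\le y\le H(x)+\da\}\subset G\cap K$ and $G\subset S_L=\{(x,y):x\in D,\ H(x)-L\da\le y\le H(x)+L\da\}$; replacing $y$ by $y-H(x)$ (an affine shear fixing $e_d$ and the class of functions that are polynomials of degree $<r$ in $y$) we may also take $H\equiv0$. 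Since $\spn(\CE)=\RR^d$ and $w_r(K;\CE)_p<\infty$, there is $Q\in\Pi^d_{r-1}(\CE)$ with $\|f-Q\|_{L^p(K)}\le(1+w_r(K;\CE)_p)M$, where $M:=\og^r_{\CE}(f;G\cup K)_p$; here I use $\og^r_{\CE}(f;K)_p\le M$, valid because $K\subset G\cup K$, $\diam(K)\le\diam(G\cup K)$ and $K_{r,h}\subset(G\cup K)_{r,h}$, and for the same reason $\og^r_{e_d}(f;G)_p\le M$. Put $\phi:=f-Q$; as $Q\in\Pi^d_{r-1}(\CE)$ and $e_d\in\CE$ we have $\tr^r_{ue_d}Q=0$, hence $\og^r_{e_d}(\phi;G)_p=\og^r_{e_d}(f;G)_p\le M$ and $\|\phi\|_{L^p(S)}\le\|\phi\|_{L^p(K)}\le(1+w_r(K;\CE)_p)M$.

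The crux is the \emph{slab estimate}
\begin{equation}\label{slab-est}
\|\psi\|_{L^p(G)}\le C_{p,r}\,L^{\,r-1+2/p}\bigl(\|\psi\|_{L^p(S)}+\og^r_{e_d}(\psi;G)_p\bigr)\qquad\text{for all }\psi\in L^p(G).
\end{equation}
To prove \eqref{slab-est} I slice along $e_d$: by Fubini, for a.e.\ $x\in D$ write $\psi_x:=\psi(x,\cdot)$ on the fibre $I_x:=[g_1(x),g_2(x)]\supset J_x:=[-\da,\da]$, with $|J_x|=2\da$, $|I_x|\le2L\da$, and $\|\psi\|_{L^p(G)}^p=\int_D\|\psi_x\|_{L^p(I_x)}^p\,dx$ (similarly over $S$ and $J_x$). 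On each fibre: (i) the one‑dimensional Whitney inequality, in the averaged form, produces a polynomial $P_x$ of degree $<r$ with $\|\psi_x-P_x\|^p_{L^p(I_x)}\le\frac{C_{p,r}}{|I_x|}\int_{|u|\le|I_x|}\|\tr^r_u\psi_x\|^p_{L^p((I_x)_{r,u})}\,du$; (ii) the Nikol'skii–Remez inequality for polynomials of degree $<r$ on a subinterval gives $\|P_x\|_{L^p(I_x)}\le C_{p,r}(|I_x|/|J_x|)^{r-1+1/p}\|P_x\|_{L^p(J_x)}\le C_{p,r}L^{r-1+1/p}\|P_x\|_{L^p(J_x)}$. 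Combining (i)–(ii), $\|\psi_x\|_{L^p(I_x)}\le C_{p,r}L^{r-1+1/p}\bigl(\|\psi_x\|_{L^p(J_x)}+\|\psi_x-P_x\|_{L^p(I_x)}\bigr)$. Raising to the $p$‑th power (for $p\ge1$; for $0<p<1$ using that $t\mapsto t^p$ is subadditive) and integrating over $x\in D$, the $\|\psi_x\|_{L^p(J_x)}$‑term sums to $C_{p,r}L^{(r-1+1/p)p}\|\psi\|_{L^p(S)}^p$, while for the remaining term I use $|I_x|\ge2\da$ and $\{|u|\le|I_x|\}\subset\{|u|\le2L\da\}$ together with Fubini to get
\[
\int_D\|\psi_x-P_x\|^p_{L^p(I_x)}\,dx\le\frac{C_{p,r}}{2\da}\int_{|u|\le2L\da}\|\tr^r_{ue_d}\psi\|^p_{L^p(G_{r,ue_d})}\,du\le C_{p,r}\,L\,\og^r_{e_d}(\psi;G)_p^p,
\]
since $G_{r,ue_d}=\emptyset$ once $|u|>\diam(G)$. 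Taking the $p$‑th root yields \eqref{slab-est}, the two losses combining as $L^{r-1+1/p}\cdot L^{1/p}=L^{r-1+2/p}$. (For $p=\infty$ one works with $\sup$ and the ordinary modulus; the $L^{1/p}$ loss disappears, matching $2/p=0$.)

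To assemble: since $S\subset K$ and $G\setminus K\subset G$, the $\ta$‑subadditivity of $\|\cdot\|^\ta_{L^p(\cdot)}$ ($\ta=\min\{p,1\}$) and \eqref{slab-est} applied to $\psi=\phi$ give
\[
\|f-Q\|_{L^p(G\cup K)}^\ta\le\|\phi\|_{L^p(K)}^\ta+\|\phi\|_{L^p(G)}^\ta\le C_{p,r}\,L^{(r-1+2/p)\ta}\bigl(1+w_r(K;\CE)_p\bigr)^\ta M^\ta,
\]
using $\|\phi\|_{L^p(S)}\le\|\phi\|_{L^p(K)}\le(1+w_r(K;\CE)_p)M$, $\og^r_{e_d}(\phi;G)_p\le M$ and $L\ge1$. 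Taking $\ta$‑th roots gives $E_r(f;\CE)_{L^p(G\cup K)}\le\|f-Q\|_{L^p(G\cup K)}\le C_{p,r}L^{r-1+2/p}(1+w_r(K;\CE)_p)\og^r_{\CE}(f;G\cup K)_p$, and the supremum over $f$ with $\og^r_{\CE}(f;G\cup K)_p\le1$ is exactly \eqref{3-9-a}.

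I expect the main obstacle to be the slab estimate \eqref{slab-est}, and within it the precise accounting of the powers of $L$: the polynomial part costs $L^{r-1+1/p}$ through the Nikol'skii–Remez inequality, and returning from the fibrewise $r$‑th differences to the directional modulus $\og^r_{e_d}(\phi;G)_p$ costs the extra $L^{1/p}$ precisely because the fibres $I_x$ can be as short as the base width $2\da$ while the admissible steps $u$ run up to $\sim L\da$. One must also be somewhat careful to use the one‑dimensional Whitney inequality in an $L^p$‑averaged form valid for all $0<p\le\infty$ (equivalently, that $\og^r(\cdot,t)_p$ is comparable to the $L^p$‑mean of $r$‑th differences over scales $|u|\lesssim t$), rather than only its $\sup$‑form, so that Fubini may be applied after integration over $D$.
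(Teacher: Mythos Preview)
Your proof is correct and follows essentially the same approach as the paper's: reduce to $\xi=e_d$, apply the one-dimensional averaged Whitney inequality on each fibre $I_x=[g_1(x),g_2(x)]$, use the Remez inequality (on the polynomial $\vi_x-Q(x,\cdot)$ in the paper, on $P_x$ in your version) to pass from $I_x$ to the base interval $J_x$, and assemble via Fubini to recover the power $L^{r-1+2/p}$. The only cosmetic differences are that you package the fibrewise argument as a standalone slab estimate and perform the extra shear to $H\equiv 0$, while the paper works directly with $f-Q$ and keeps $H$ general.
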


\begin{proof} Without loss of generality, we may assume $\xi=e_d:=(0,\dots, 0,1)$ and $\pmb \rho=I$ in Definition ~\ref{def-3-4}. 	
	Let $A_1:=w_r(K;\CE)_p$, let $f\in L^p(K\cup G)$ and let $Q\in \Pi_{r-1}^d(\CE)$ be such that
	\begin{equation}\label{3-10}
	\|f-Q\|_{L^p(K)} \leq A_1 \og_\CE^r(f; K)_p.
	\end{equation}
	It is enough to show that
	\begin{equation}\label{3-11}
	\|f-Q\|_{L^p(G)} \leq C_{p,r} L^{r-1+2/p} (1+A_1)  \og_\CE^r(f; K\cup G)_p.
	\end{equation}

	For simplicity, we may assume that $p<\infty$.  (The case $p=\infty$  can be treated similarly.)
	By the Whitney inequality in one variable (see \cite[p. 374, p. 183]{De-Lo}), for each fixed $x\in D$, there exists $\vi_x\in \Pi_{r-1}^1$ such that \begin{align*}
	\int_{g_1(x)}^{g_2(x)} &|f(x,y) -\vi_x(y)|^p \, dy \leq 	C_{p,r}  \sup_{0<hr<g_2(x)-g_1(x)}  \int_{g_1(x)}^{g_2(x)-rh} |\tr_{he_d}^r f(x,y)|^p dy \\
	&\leq\f { C_{p, r}} {g_2(x)-g_1(x)}  \int_0^{(g_2(x)-g_1(x))/r}\Bl[ \int_{I_{x, hr}} |\tr_{ h e_d}^r f(x,y)|^p dy\Br] dh,
	\end{align*}
	where
$I_{x,rh}:=\bigl\{ y\in \RR:\  \ y, y+ rh \in [g_1(x), g_2(x)]\bigr\}$,and the second  step uses (5.17) of \cite[p. 373]{De-Lo}.
Since $S\subset G\subset S_L$, we have $\da\leq g_2(x)-g_1(x)\leq 2 L\da$ for any $x\in D$. It follows that
	\begin{align}
\int_D	\int_{g_1(x)}^{g_2(x)} |f(x,y) -\vi_x(y)|^p \, dy dx
& \leq \f{C_{p,r}}\da  \int_0^{2L\da } \int_D \int_{I_{x,rh}} |\tr_{h e_d}^r f(x,y)|^p dy dx dh\notag\\
 & \leq C_{p,r} L\cdot  \og_{e_d} ^r (f; G)_p^p.\label{3-11}
	\end{align}
	Thus,
		\begin{align*}
	\|f-Q\|_{L^p(G)} ^p &=\int_D \int_{g_1(x)}^{g_2(x)} |f(x,y)-Q(x,y)|^p \, dy dx\\
	& \leq C_{p,r} L\cdot \og_{e_d}^r(f; G)_p^p + 2^p \int_D \int_{g_1(x)}^{g_2(x)} |\vi_x(y)-Q(x,y)|^p \, dydx.
	\end{align*}
	Since $e_d\in\CE$, $Q(x,y)\in \Pi_{r-1}^d(\CE)$ is an algebraic polynomial of degree $<r$ in the  variable $y\in\RR$.  It follows  by the Remez inequality in one variable that
		\begin{align*}
	 \int_D& \int_{g_1(x)}^{g_2(x)} |\vi_x(y)-Q(x,y)|^p \, dydx\leq C_{p,r} L^{(r-1)p+1}  \int_D \int_{H(x)-\da}^{H(x)+\da} |\vi_x(y)-Q(x,y)|^p \, dydx,
	  \end{align*}
	 which,  using \eqref{3-11} and \eqref{3-10}, is estimated above by
	 \begin{align*}
	 		&\leq C_{p,r} L^{(r-1)p+2}\cdot \og_{e_d}^r(f; G)_p^p+ C_{p,r} L^{(r-1)p+1} \|f-Q\|_{L^p(S)}^p\\
		& \leq C_{p,r} L^{(r-1)p+2}\cdot \og_{e_d}^r(f; G)_p^p+ C_{p,r} L^{(r-1)p+1} A_1^p \og_{\CE} ^r (f; K)_p^p\\
		&		 \leq C_{p,r} L^{(r-1)p+2} (1+A_1^p) \og_{\CE}(f; K\cup G)^p_p.
		\end{align*}
This proves \eqref{3-11}.
\end{proof}

%
%
%

\begin{rem}\label{rem-7-4}
	
	If, in addition, we assume the domain  $D$ in Definition ~\ref{def-3-4}  is a  compact 	parallelepiped  in $\RR^{d-1}$,  then we may choose $K=S$ and use   Lemma ~\ref{lem-5-3-0}  to obtain
	\begin{equation}
	w_r(G; \EEE)_p\leq C(p,r) L^{r-1+2/p}<\infty,
	\end{equation}
	where $\CE\subset\sph$ is the  set of the edge directions of $S$.
	
%
%
\end{rem}
%
%

\section{ The ordinary  multivariate  Whitney inequality }\label{sec:4}

It  was shown in \cite{De-Le} that for all $r\in\NN$ and $p>0$,   $\sup_{K} w_r(K)_p\leq C_{p,d}<\infty$, where the supremum is taken over all convex bodies in $\RR^d$.
To illustrate the main idea of our method,  we extend this result to a class of more general, and possibly non-convex  domains.

\begin{defn}
	A domain $\Og\subset\RR^d$ is star-shaped with respect to a ball $B$ in $\RR^d$ if for each $x\in\Og$, the closed convex hull of $\{x\}\cup B$ is contained in $\Og$. Given a constant $R>1$, we say a domain $\Og\subset\RR^d$ belongs to the  class $\CS^d(R)$ if there exists an affine transformation $T:\RR^d\to \RR^d$ such that $T(\Og)$ is a star-shaped domain with respect to the unit ball $B_1[0]$ in $\RR^d$ and is contained in the ball $B_R[0]$.
\end{defn}

\begin{rem}
	 There are many non-convex sets in the  class $\CS^d(R)$. A simple example is as follows. Let $A=\{x_1,x_2\}\subset R\SS^{d-1}$ be a set of two points distance $\delta$ apart, where $0<\delta<2\sqrt{R^2-1}$. Take $\Omega$ to be the union over $x\in A$ of the convex hulls of $\{x\}\cup B_1[0]$. Then $\Omega$ is not convex and belongs to $\CS^d(R)$. Various other choices of $A$ are possible.
\end{rem}

According to  John's theorem (\cite{Jo}, \cite[Proposition 2.5]{De-Le}), every convex body  in $\RR^d$ belongs to the class $\CS^d(d)$.
\begin{thm}\label{De-Le-1}  Given $r\in\NN$,  $0<p\leq \infty$ and $R>1$, there exists a constant $C(p, d,r,R)$ depending only on $p, d, r$ and $R$ when $R\to \infty$  such that
	\begin{equation}
\sup_{\Og\in\CS^d(R)}	w_r(\Og)_p\leq C(p, d, r, R)<\infty.
	\end{equation}
\end{thm}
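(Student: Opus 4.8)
The plan is to reduce the statement to the rectangular-box case (Lemma~\ref{lem-5-3-0}) via the iterated peeling estimate of Lemma~\ref{rem:whitney key tool seq appl}, together with the directional-domain bound of Lemma~\ref{cor-7-3} and Remark~\ref{rem-7-4}. Since $w_r(\Og)_p = w_r(\Og;\sph)_p$ and both the star-shapedness class $\CS^d(R)$ and the Whitney constant are affine-invariant, we may assume $B_1[0]\subset \Og\subset B_R[0]$ with $\Og$ star-shaped with respect to $B_1[0]$. The whole point is that the set of directions we use will be $\sph$ itself (so there is no constraint on which $h_k$ we pick in Lemma~\ref{rem:whitney key tool seq appl}): we only need to exhibit \emph{some} decomposition $\Og = \bigcup_{k=0}^m E_k$ meeting condition \eqref{7-2-0-18}, with $m = m(d,R)$ controlled and $E_0$ a box.

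First I would fix a cube $E_0$ centered at the origin inside the unit ball, say of half-side $1/(2\sqrt d)$, so that $w_r(E_0;\sph)_p = w_r(E_0)_p \le C(p,d,r)$ by Lemma~\ref{lem-5-3-0}. Then I would cover the ``shell'' $\Og \setminus E_0$ by finitely many pieces, each of which is a regular $\xi$-directional domain (Definition~\ref{def-3-4}) with bounded parameter $L = L(R)$, whose base is a box contained in the previously-built-up region $\Og_{k-1}$, and which is ``reachable'' from that region by translating along its $\xi$-direction $r$ times while staying inside. Concretely: cover $\sph$ by finitely many spherical caps $\{C_\nu\}_{\nu=1}^{N}$, $N = N(d)$, each of angular radius a small absolute constant; for each cap with center $\xi_\nu$, the cone-like region $\{x\in\Og : x/\|x\| \in C_\nu,\ \|x\|\ge 1/(4\sqrt d)\}$ is, after the rotation sending $e_d$ to $\xi_\nu$, a graph domain over a $(d-1)$-dimensional base that sits between two parallel hyperplane slabs of thickness comparable to $1$ and $R$ respectively — i.e. a regular $\xi_\nu$-directional domain with parameter $L \asymp R$. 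Star-shapedness w.r.t.\ $B_1[0]$ is exactly what guarantees the graph structure (each ray from $0$ meets $\partial\Og$ once) and the lower slab bound. Shrinking each such piece slightly so its base lies over a sub-box of the cube, I get that translating it by $(\|h\|)\xi_\nu$ for a suitable small $\|h\|$, $r$ times, lands inside the union of $E_0$ and the slab — this verifies \eqref{7-2-0-18} after possibly subdividing each cap-piece radially into $O(r\log R)$ thin annular layers so that $r$ translations of thickness $\le \da$ cover each layer.

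With such a decomposition in hand, Lemma~\ref{rem:whitney key tool seq appl} gives $w_r(\Og;\sph)_p^\ta \le 2^{mr} w_r(E_0)_p^\ta + \frac{2^{mr}-1}{2^r-1}$ with $m = m(d,r,R)$; alternatively, one applies Lemma~\ref{cor-7-3}/Remark~\ref{rem-7-4} once per cap to bound each cap-piece's Whitney constant by $C_{p,r} L^{r-1+2/p}$ and then glues the $N$ pieces plus the cube together by $N$ further applications of Lemma~\ref{lem-3-3}. Either route yields $w_r(\Og)_p \le C(p,d,r,R) < \infty$, and tracking the exponents shows the dependence on $R$ is polynomial (of degree $O_r(1)$ in $R$) with the $d$- and $r$-dependence hidden in $N(d)$ and $2^{m r}$, so the constant is indeed finite and, for fixed $p,d,r$, grows only like a fixed power of $R$ as $R\to\infty$.

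The main obstacle I expect is purely geometric bookkeeping: verifying that the cap-pieces can genuinely be arranged into a \emph{nested} sequence $E_0\subset\Og_1\subset\cdots\subset\Og_m = \Og$ in which each new piece $E_k$ satisfies the strong containment $E_k \subset \bigcap_{j=1}^r(\Og_{k-1}+jh_k)$ with the \emph{same} vector $h_k$ for all $j$ — this forces each $E_k$ to be a thin layer of ``height'' at most one translation-step above the already-covered set, so the radial subdivision count $m$ must be chosen carefully (it depends on $r$ and on $\log(R/\text{inner radius})$, not just on $R$), and one must check the base of each layer really does fit over a face of the box $E_0$ (using star-shapedness to slide it down). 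None of the individual steps is deep, but making the covering explicit enough that all hypotheses of Lemmas~\ref{lem-3-3}--\ref{cor-7-3} are literally satisfied is where the work lies.
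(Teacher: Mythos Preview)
Your approach is the paper's approach: the ``alternative route'' you describe---iterating Lemma~\ref{cor-7-3} over directional pieces whose bases sit inside a central cube $G_0$, starting from $w_r(G_0)_p<\infty$ via Lemma~\ref{lem-5-3-0}---is exactly what the paper does. The one place your sketch does not go through as written is the claim that with $N=N(d)$ caps of fixed angular radius, each cone-like piece $\{x\in\Og: x/\|x\|\in C_\nu,\ \|x\|\ge 1/(4\sqrt d)\}$ is a regular $\xi_\nu$-directional domain with parameter $L\asymp R$. It is not: over its full projection $D$ onto $\xi_\nu^\perp$ the vertical extent of such a cone degenerates to zero at the edge of $D$, so no slab $S$ of positive thickness over all of $D$ can fit inside it, and condition~(ii) of Definition~\ref{def-3-4} fails. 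The paper sidesteps this by using \emph{thin cylinders} rather than wide cones: it covers the sphere of radius $R$ by $m\le (CRd)^{d-1}$ balls of radius $1/(2d)$, and for each center direction $\xi_j$ takes $G_j:=\{\eta+t\xi_j\in\Og:\eta\in I_j,\ t\ge 0\}$ where $I_j$ is a $(d{-}1)$-cube of side $1/d$ perpendicular to $\xi_j$. Each such $G_j$ is then manifestly a regular $\xi_j$-directional domain with parameter $\le C_d R$ whose base $S_j$ lies inside $G_0=[-1/\sqrt d,1/\sqrt d]^d$, and star-shapedness with respect to $B_1[0]$ is precisely what guarantees $\Og=\bigcup_{j=0}^m G_j$. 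So the piece count necessarily depends on $R$, and once Lemma~\ref{cor-7-3} is invoked the radial layering you propose for your primary route becomes unnecessary.
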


\begin{proof}By \eqref{1-6a}, without loss of generality, we may assume that $\Og$ is a star-shaped domain  with respect to the unit  ball $B_1[0]$ and $\Og\subset B_R[0]$. 	Following  \cite{Di-Pr08}, we may   decompose $\Og$ as a finite union $\Og=\bigcup_{j=0}^m G_j$ of $(m+1)\leq (C Rd)^{d-1}$ regular directional domains $G_j$ with parameters $\leq C_d R$ such that $G_0:=[-\f 1{\sqrt{d}}, \f 1{\sqrt{d}}]^{d}$, and   the  base of  each   $G_j$, $j\ge 1$ is contained in $G_0$. Indeed, the decomposition  can be constructed as follows. First,   cover the sphere $\{ \xi\in\RR^{d}:\  \ \|\xi\|=R\}$ with
	$m\leq (CRd)^{d-1}$ open balls  $B_1, \dots, B_m$  of radius $1/ (2d)$ in $\RR^{d}$.    Denote by $\xi_{j}$ the unit vector pointing from the origin to the center of the ball $B_j$, and let $I_j$ denote a $(d-1)$-dimensional cube  with  side length $1/d$ centered at the origin and perpendicular to $\xi_j$. For each point $x\in\Omega\setminus B_{1/(2d)}[0]$ the perpendicular from $x$ onto $I_j$ belongs to the closed convex hull of $x$ and $B_{1/(2d)}[0]$, where $j$ is such that $B_j$ contains $x\frac{R}{\|x\|}$. Indeed, this readily follows from the fact that the two parallel lines having the direction $\xi_j$ and passing through the origin and through $x$ are at most $\frac1{2d}$ far apart. Define $ G_0:=[-\f 1{\sqrt{d}}, \f 1{\sqrt{d}}]^{d}$ and
	$$ G_j:=\Bl\{ \eta+t\xi_j\in \Og:\  \  \eta\in I_j,\   \  t\ge 0\Br\},\  \ j=1,\dots, m.$$
	Since $\Og$ is star-shaped with respect to the unit ball $B_1[0]$ and  $B_1[0]\subset \Og\subset B_{R}[0]$, it is easily seen  that  $\Og=\bigcup_{j=0}^m G_j$ and each   $G_j$, $j\ge 1$  is a regular $\xi_j$-directional  domain with parameter $\leq C_d R$ and  base
	$$S_j:=\{\eta+t\xi_j:\  \ \eta\in I_j, \  \  0\leq t\leq \sqrt{d-1}/(2d)\}\subset G_0.$$

	Now let $\Og_j:=\bigcup_{k=0}^j G_k$ for $j=0,1,\dots, m$. Applying  Lemma~\ref{cor-7-3} iteratively to the pairs $(G, K)=(G_j, \Og_{j-1})$ for $j=1,\dots, m$, we obtain
	$$w_r(\Og_j)_p=w_r(\Og_{j-1}\cup G_j)_p \leq C(p,r,d,R) \bigl( 1+w_r(\Og_{j-1})_p\bigr),\  \ j=1,2,\dots, m.$$
	By Lemma ~\ref{lem-5-3-0}, this implies that
	$$ w_r(\Og)_p=w_r(\Og_m)_p \leq  C(p,r,d,R) \bigl( 1+w_r(\Og_{0})_p\bigr)\leq C(p,r,d,R)<\infty. $$

\end{proof}

\begin{rem}\label{rem-4-3}
	The above proof also yields a directional  Whitney inequality with   the number of directions required in the directional modulus   $\ge (cdR)^{d-1}$. Indeed, from the above proof,  if $\Og\subset B_R[0]$ is star-shaped with respect to the unit ball $B_1[0]$, and $\CE\subset \sph$ is a finite set of directions satisfying that $\sph\subset \bigcup_{\xi\in \CE} B_{1/{2Rd}}(\xi_j)$, then $w_r(\Og;\CE)_p\leq C(p, r, d, R)<\infty$ for all $r\in\NN$ and $0<p\leq \infty$.
	
\end{rem}

\section{On choices of directions}\label{sec:5}

As pointed out in Remark ~\ref{rem-4-3},  the proof of  Theorem ~\ref{De-Le-1}  yields a directional  Whitney inequality on each convex body, but with the set of directions  depending on the domain.
It is therefore  natural to ask whether there exists a  universal    set of directions $\CE\neq \sph $  for which $\spn(\CE)=\RR^d$ and $w_r(\Og;\CE)_p<\infty$ holds for all $r\in\NN$, $p>0$ and every convex body $\Og\subset \RR^d$.
We will   prove two results related to this question in this section.
\begin{thm}\label{thm-4-1}
	 Let $d\ge 2$ and let $\CE\subset \sph$  be a   set of directions such that $\spn(\CE)=\RR^d$ and   $\overline{\CE\cup (-\CE)}\neq \sph$. Then there exists a convex body $K\subset \RR^d$ such that
	 $w_r(K; \CE)_\infty=\infty$ for all $r\in\NN$. 	
\end{thm}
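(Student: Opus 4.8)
The plan is to exploit the hole in $\overline{\CE\cup(-\CE)}$ by exhibiting a convex body that is ``degenerate'' with respect to the missing direction. First I would fix a nonempty open spherical cap $U\subset\sph$ disjoint from $\CE\cup(-\CE)$ and a direction $\xi_0\in U$; after an orthogonal change of coordinates we may assume $\xi_0=e_d$, so there is $\delta_0\in(0,\pi/2)$ with $|\langle\xi,e_d\rangle|\le\cos\delta_0<1$ for every $\xi\in\CE$. Thus each $\xi\in\CE$ makes angle at least $\delta_0$ with the $e_d$-axis, and hence its projection onto $e_d^\perp$ has Euclidean norm at least $\sin\delta_0>0$.

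The body $K$ should be a thin convex body ``elongated along $e_d$'' whose transverse width degenerates near a distinguished boundary portion at a carefully chosen rate. The point of the degeneracy is this: since every $\xi\in\CE$ is uniformly transverse to $e_d$, a chord of $K$ in a direction $\xi\in\CE$ through a point lying near that boundary portion has length at most $(\sin\delta_0)^{-1}$ times the local transverse width of $K$, and so is very short there; at the same time $K$ still has a long extent in the direction $e_d\notin\CE$, along which the restriction of any $Q\in\Pi^d_{r-1}(\CE)$ is only a polynomial of degree $\le d(r-1)$ by Proposition~\ref{pro:properties}(ii), and nothing better.

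For the failure, fix $r\in\NN$. For each large $n$ I would construct a test function $f_n\in C(K)$, of the form $f_n(x)=g_n(\langle x,e_d\rangle)$ (a ridge in the missing direction), or a bump localized near the degenerate boundary portion, with $\|f_n\|_{L^\infty(K)}\to\infty$ but whose ``oscillation'' is concentrated at scales smaller than the local $\CE$-chord lengths. Then $\omega^r_{\CE}(f_n;K)_\infty\le C_r$ uniformly in $n$: near the degenerate portion the $\CE$-chords are too short to resolve the oscillation of $f_n$, and away from it $f_n$ is, up to scaling, a fixed function contributing a bounded amount. On the other hand, restricting an arbitrary $Q\in\Pi^d_{r-1}(\CE)$ to a segment $\ell\subset K$ in a direction not in $\CE$ (e.g.\ an $e_d$-segment) gives
$$E_r(f_n;\CE)_{L^\infty(K)}\ \ge\ \inf\bigl\{\,\|f_n|_\ell-p\|_{C(\ell)}:\ \deg p\le d(r-1)\,\bigr\},$$
and a one-dimensional Remez/Chebyshev-type inequality shows that a polynomial of degree $\le d(r-1)$ cannot approximate $f_n|_\ell$ --- a function whose sup norm $\|f_n\|_{L^\infty(K)}$ is attained only near an endpoint of $\ell$ --- to within less than $c_{r,d}\,\|f_n\|_{L^\infty(K)}$. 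Combining, $w_r(K;\CE)_\infty\ge c_{r,d}\,\|f_n\|_{L^\infty(K)}/C_r\to\infty$; since $K$ is chosen independently of $r$, this holds for all $r\in\NN$.

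The crux --- and where I expect the real difficulty to lie --- is making the upper bound on $\omega^r_{\CE}(f_n;K)_\infty$ and the lower bound on $E_r(f_n;\CE)_{L^\infty(K)}$ hold simultaneously, which dictates the precise shape of $K$. A ``scale-invariant'' degeneracy (a straight thin cylinder along $e_d$, or a sharp wedge) is useless: there the local $\CE$-chord length and the natural oscillation scale of any admissible $f_n$ decay at the same rate, so $\omega^r_{\CE}(f_n;K)_\infty$ and $\|f_n\|_{L^\infty(K)}$ grow together and only a bounded ratio results. One must instead calibrate the rate at which $K$ pinches near the distinguished boundary piece against the growth and truncation of $g_n$, so that the large values of $f_n$ are confined to a region whose $\CE$-chords are short enough to absorb the oscillation, while $f_n|_\ell$ remains genuinely far from every polynomial of bounded degree. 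Constructing such a convex body (and checking that it stays convex under the required pinching) and carrying out the matching of scales is the technical heart of the argument.
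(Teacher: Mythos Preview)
Your overall strategy---pick a convex body degenerate toward the missing direction $\xi$, use ridge test functions $f_n(x)=g_n(x\cdot\xi)$, bound $\omega_\CE^r$ via shortness of $\CE$-chords, and force the lower bound through $\Pi_{r-1}^d(\CE)\subset\Pi_{d(r-1)}^d$---is exactly the paper's. But you have misdiagnosed where the difficulty lies and dismissed the construction that actually works.

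You call a cone or wedge ``useless'' because the $\CE$-chord length and the oscillation scale of $f_n$ decay at the same rate. That is true for a generic $g_n$, but it is precisely the reason to choose $g_n(t)=\max\{-n,\ln t\}$: the logarithm is itself scale-invariant in the relevant sense, namely $|\ln(ct)-\ln t|=\ln c$ is bounded independently of $t$. The paper takes the truncated circular cone $K=\{x:\|x\|(1-\varepsilon)\le x\cdot\xi\le 1\}$ with $0<\varepsilon<\delta$, and the key geometric fact is that if $x,y\in K$ with $(y-x)/\|y-x\|\in\CE$ then $c^{-1}(x\cdot\xi)\le y\cdot\xi\le c(x\cdot\xi)$ for a fixed constant $c$ (this is exactly where $\eta\cdot\xi\le 1-\delta$ for $\eta\in\CE$ enters). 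Consequently $\omega_\CE^1(f_n;K)_\infty\le C$ uniformly in $n$, while $\|f_n\|_\infty=n\to\infty$. No calibration of a pinching rate against $g_n$ is needed; the cone is fixed once and for all, independently of $r$ and $n$, and the ``matching of scales'' you anticipate as the technical heart simply does not arise.

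For the lower bound the paper also bypasses Remez-type arguments: since every $Q\in\Pi_{r-1}^d(\CE)$ has total degree $\le d(r-1)$, the $(dr)$-th difference $\triangle_h^{dr}Q$ vanishes identically, and with $h=\tfrac{1}{dr}\xi$ one gets directly
\[
C_r\|f_n-Q\|_{L^\infty(K)}\ \ge\ |\triangle_h^{dr}f_n(0)|\ \ge\ n-2^{dr}\ln(dr)\ \to\ \infty.
\]
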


By Theorem ~\ref{thm-4-1}, given any $r\in\NN$, there does not exist a  universal finite    set of directions $\CE $  such that $\spn(\CE)=\RR^d$ and  $w_r(\Og;\CE)_\infty <\infty$  for every convex body $\Og\subset \RR^d$. Thus, one has   to take the domain into consideration  when choosing the directions for the Whitney inequality.

Using Theorem ~\ref{thm-4-1},  Remark ~\ref{rem-4-3} and  John's theorem (\cite[Proposition 2.5]{De-Le}) for convex bodies, we immediately derive   the following characterization of   universal   sets  of directions $\CE$ for which $w_r(\Og;\CE)_p<\infty$  for all $r\in\NN$, $p>0$ and every convex body $\Og\subset \RR^d$.

\begin{cor}
 Let $d\ge 2$ and let $\CE\subset \sph$  be a   set of directions such that $\spn(\CE)=\RR^d$.  Then in order that $w_r(\Og; \CE)_p<\infty$ for every $r\in \NN$, $p>0$ and every convex body $\Og\subset \RR^d$, it is necessary and sufficient that $\overline{\CE\cup (-\CE)}=\sph$.	
\end{cor}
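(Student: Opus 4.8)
The plan is to deduce necessity directly from Theorem~\ref{thm-4-1} and to establish sufficiency by combining John's theorem, the affine covariance~\eqref{1-6a}, the density hypothesis, Remark~\ref{rem-4-3}, the reflection identity~\eqref{1-6-00}, and the monotonicity of the directional Whitney inequality under enlargement of the direction set proved earlier in this section. For necessity, suppose $\overline{\CE\cup(-\CE)}\neq\sph$; since $\spn(\CE)=\RR^d$, Theorem~\ref{thm-4-1} produces a convex body $K\subset\RR^d$ with $w_r(K;\CE)_\infty=\infty$ for every $r\in\NN$, so the required finiteness fails already for $\Og=K$, $p=\infty$ and any $r$. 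Hence $\overline{\CE\cup(-\CE)}=\sph$ is necessary.

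For sufficiency, assume $\overline{\CE\cup(-\CE)}=\sph$ and fix $r\in\NN$, $0<p\leq\infty$ and a convex body $\Og\subset\RR^d$. First I would normalize $\Og$ via John's theorem (\cite[Proposition 2.5]{De-Le}): there is an invertible affine map $Tx=Ax+x_0$ such that $T(\Og)$ is star-shaped with respect to $B_1[0]$ and $T(\Og)\subset B_d[0]$. By~\eqref{1-6a} one has $w_r(\Og;\CE)_p=w_r(T(\Og);\CE_T)_p$ with $\CE_T=\{A^{-1}\xi/\|A^{-1}\xi\|:\xi\in\CE\}$, and since $\Phi(\xi):=A^{-1}\xi/\|A^{-1}\xi\|$ is a homeomorphism of $\sph$ commuting with the antipodal map, the density hypothesis is preserved: $\overline{\CE_T\cup(-\CE_T)}=\Phi\bigl(\overline{\CE\cup(-\CE)}\bigr)=\sph$. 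So we may assume from the start that $\Og$ is star-shaped with respect to $B_1[0]$, that $\Og\subset B_d[0]$, and that $\overline{\CE\cup(-\CE)}=\sph$. Then, using that $\CE\cup(-\CE)$ is dense in the compact sphere $\sph$, I would pick a finite subset $\CE_0\subset\CE\cup(-\CE)$ with $\sph\subset\bigcup_{\xi\in\CE_0}B_{1/(2d^2)}(\xi)$; Remark~\ref{rem-4-3} with $R=d$ then gives $w_r(\Og;\CE_0)_p<\infty$ for all $r\in\NN$ and $0<p\leq\infty$, i.e.\ $\CE_0$ admits the directional Whitney inequality on $\Og$. Since $\CE_0\subset\CE\cup(-\CE)$, the enlargement monotonicity shows $\CE\cup(-\CE)$ admits it on $\Og$ too, and~\eqref{1-6-00} yields $w_r(\Og;\CE)_p=w_r(\Og;\CE\cup(-\CE))_p<\infty$; as $r$, $p$ and $\Og$ were arbitrary, this completes sufficiency.

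The only points requiring care are that the density condition $\overline{\CE\cup(-\CE)}=\sph$ survives the affine normalization coming from John's theorem --- which is exactly what the homeomorphism $\Phi$ takes care of --- and the passage from the finite covering set $\CE_0$ back to $\CE$, which rests on~\eqref{1-6-00} together with the enlargement monotonicity; the rest is a direct concatenation of Theorem~\ref{thm-4-1} and Remark~\ref{rem-4-3}. It is worth noting that $\overline{\CE\cup(-\CE)}=\sph$ forces $\CE$ to be infinite, since a finite set is closed, which is precisely why one passes to a finite covering subset $\CE_0$ rather than trying to invoke Remark~\ref{rem-4-3} for $\CE$ itself.
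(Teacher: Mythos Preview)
Your proof is correct and follows the same route the paper sketches---necessity from Theorem~\ref{thm-4-1}, sufficiency from John's theorem combined with Remark~\ref{rem-4-3}. You go further than the paper's one-line justification by making explicit the two points it suppresses: that the density hypothesis survives the affine normalization via the sphere homeomorphism $\Phi$, and that the passage from the finite covering subset $\CE_0\subset\CE\cup(-\CE)$ back to $\CE$ requires the enlargement monotonicity (Theorem~\ref{thm:enlarged set of directions}) together with~\eqref{1-6-00}.
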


Our second result shows that  the directional Whitney inequality  remains valid if the set of directions is enlarged, which, in particular,  implies  that if $\CE_0\subset \sph$ is a universal set of directions for  the directional Whitney inequality on a class of domains, so is any larger set of directions $\CE\supset \CE_0$.
\begin{thm}\label{thm:enlarged set of directions}
	Let  $G$ be the closure of a bounded connected nonempty open set in $\RR^d$ .   Assume that  $\EEE_0\subset\SS^{d-1}$  is a set of directions  such that $\spn(\CE)=\RR^d$  and   $w_r(G; \CE_0)_p<\infty$ for some $r\in\NN$ and $0<p\leq \infty$. Then for any   $\CE_0\subset \EEE\subset \SS^{d-1}$,
	\begin{equation}\label{eqn:whitney for all}
	w_r(G; \CE)_p\leq C (w_r(G; \CE_0)_p+1),
	\end{equation}	
	where the constant $C>0$ depends only on $p, r$ and the set $G$.
\end{thm}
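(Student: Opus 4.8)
The plan is to bootstrap the hypothesis $w_r(G;\CE_0)_p<\infty$ to the larger set $\CE$ by a finite-dimensional argument on the polynomial space $\Pi^d_{r-1}(\CE_0)$. Throughout put $\ta:=\min\{p,1\}$, $V:=\Pi^d_{r-1}(\CE_0)$ and $W:=\Pi^d_{r-1}(\CE)$. Since $\CE_0\subset\CE$, every $Q\in W$ is in particular a polynomial of degree $<r$ along each direction of $\CE_0$, so $W\subset V$; and since $\spn(\CE_0)=\RR^d$, Proposition~\ref{pro:properties} gives $\dim V<\infty$ and $V\subset\Pi^d_{d(r-1)}$. It suffices to prove $E_r(f;\CE)_{L^p(G)}\le C\bigl(w_r(G;\CE_0)_p+1\bigr)\,\og_\CE^r(f;G)_p$ for every $f\in L^p(G)$.

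Given such an $f$, first choose $Q_0\in V$ with $\|f-Q_0\|_{L^p(G)}\le 2E_r(f;\CE_0)_{L^p(G)}$; since $\og_{\CE_0}^r(f;G)_p\le\og_\CE^r(f;G)_p$, this gives $\|f-Q_0\|_{L^p(G)}^\ta\le 2\,w_r(G;\CE_0)_p^\ta\,\og_\CE^r(f;G)_p^\ta$. Writing $\tr_{u\xi}^rQ_0=\tr_{u\xi}^rf-\tr_{u\xi}^r(f-Q_0)$ and using the $\ta$-triangle inequality together with $G_{r,u\xi}+ju\xi\subset G$, exactly as in the proof of Lemma~\ref{lem-3-3}, one obtains for every $\xi\in\CE$
\[
\og_\xi^r(Q_0;G)_p^\ta\le\og_\xi^r(f;G)_p^\ta+2^r\|f-Q_0\|_{L^p(G)}^\ta\le\bigl(1+2^{r+1}w_r(G;\CE_0)_p^\ta\bigr)\og_\CE^r(f;G)_p^\ta,
\]
so that $\og_\CE^r(Q_0;G)_p\le A\,\og_\CE^r(f;G)_p$ with $A:=\bigl(1+2^{r+1}w_r(G;\CE_0)_p^\ta\bigr)^{1/\ta}\le C_{p,r}\bigl(1+w_r(G;\CE_0)_p\bigr)$.

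The main step --- the one I expect to be the real obstacle --- is to produce a constant $C_1=C_1(p,r,G,\CE_0,\CE)$ with $\dist_{L^p(G)}(Q,W)\le C_1\,\og_\CE^r(Q;G)_p$ for all $Q\in V$. I would prove this on the finite-dimensional space $V$ by comparing two norms. Since a nonzero element of $V\subset\Pi^d_{d(r-1)}$ cannot vanish on the nonempty open set $\inte G$, the restriction of $\|\cdot\|_{L^p(G)}$ to $V$ is a genuine (quasi-)norm, so $Q\mapsto\dist_{L^p(G)}(Q,W)$ descends to a (quasi-)norm on the finite-dimensional quotient $V/W$. On the other hand $Q\mapsto\og_\CE^r(Q;G)_p$ is positively homogeneous and satisfies the $\ta$-triangle inequality on $V$, and by Remark~\ref{rem-1-3} --- which is where connectedness of $G$ is used --- its zero set in $V$ equals $\Pi^d_{r-1}(\CE)\cap V=W$, so it likewise induces a (quasi-)norm on $V/W$. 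Since any two (quasi-)norms on a finite-dimensional space are equivalent, the estimate follows; this is also the step at which $C_1$, and hence the final $C$, picks up its dependence on $\CE$ beyond $\CE_0$.

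Finally, applying this estimate to $Q:=Q_0$ yields $P\in W$ with $\|Q_0-P\|_{L^p(G)}\le C_1A\,\og_\CE^r(f;G)_p$, and then
\[
E_r(f;\CE)_{L^p(G)}^\ta\le\|f-P\|_{L^p(G)}^\ta\le\|f-Q_0\|_{L^p(G)}^\ta+\|Q_0-P\|_{L^p(G)}^\ta\le\bigl(2w_r(G;\CE_0)_p^\ta+(C_1A)^\ta\bigr)\og_\CE^r(f;G)_p^\ta,
\]
which is the required inequality; taking the supremum over all $f$ with $\og_\CE^r(f;G)_p\le1$ then gives \eqref{eqn:whitney for all}. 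Apart from the displayed estimate on $V$ --- which uses only the finite-dimensionality of $\Pi^d_{r-1}(\CE_0)$ (Proposition~\ref{pro:properties}) and the fact that a vanishing directional modulus detects membership in $\Pi^d_{r-1}(\CE)$ (Remark~\ref{rem-1-3}) --- every step is elementary $\ta$-arithmetic.
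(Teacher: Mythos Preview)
Your proof is correct and follows essentially the same route as the paper: approximate $f$ by $Q_0\in\Pi^d_{r-1}(\CE_0)$, invoke the equivalence of quasi-norms on the finite-dimensional quotient $\Pi^d_{r-1}(\CE_0)/\Pi^d_{r-1}(\CE)$ to get $\dist_{L^p(G)}(Q_0,W)\le C\,\og^r_{\CE}(Q_0;G)_p$, and finish with the $\ta$-triangle inequality. Your observation that the resulting constant inherits a dependence on $\CE$ (and $\CE_0$) through the quotient-space step is well taken; the paper's proof has the same dependency despite the phrasing of the statement.
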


\subsection{ Proof of Theorem ~\ref{thm-4-1}}

\begin{proof}

By \eqref{1-6-00}, we may assume, without loss of generality that $\CE=-\CE$ and   $\overline{\CE}\neq \sph$.  Our aim is to find a  convex body  $K\subset \RR^d$ such that
\begin{equation}\label{4-4}
 w_r(K; \CE)_\infty=\infty.
\end{equation}

	Fix a direction  $\xi\in \sph\setminus \overline{\CE}$.  Clearly,  there exists a constant $\da\in (0,1)$ such that
	\begin{equation}\label{4-5}
	\eta\cdot \xi\leq 1-\da,\   \forall \eta\in\CE.
	\end{equation}
	Let $0<\va<\da$  be a constant, and define
	$$K:=\Bl\{x\in\RR^d:\  \ \|x\|(1-\va)\le x \cdot \xi \le 1\Br\}.$$
	Then $K$ is a convex body in $\RR^d$.
	Consider a sequence of continuous functions on $K$ given by
	$$f_n(x): =g_n(x\cdot \xi),\  \ x\in K,\   \ n=1,2,\dots,$$
	where $g_n(0)=-n$ and
	$g_n(t):=
	\max \{ -n, \ln t\}$ for $t>0$.
	Clearly,  to show \eqref{4-4}, it is sufficient to prove that
	\begin{equation}\label{4-6}
	\sup_n \omega_{\CE}^r(f_n;K)_{\infty}\leq 	C_r<\infty,
	\end{equation}
	and
	\begin{equation}\label{5-1}
	\lim_{n\to\infty} E_{r}(f_n;\EEE)_{L^\infty(K)}=\infty.
	\end{equation}

	To this end,   we first  claim that if $x,y\in K$ and $(y-x)/\|y-x\|\in\CE$, then
	\begin{equation}\label{4-8}
	c^{-1} ( x\cdot \xi)\leq y\cdot \xi \leq c (x\cdot \xi),
	\end{equation}
	where
	$c= \f {2-\da}{\da-\va} \f 1 {1-\va}$.
	Indeed,
	since
	$\f {y-x}{\|y-x\|} \in\CE,$
	we obtain from \eqref{4-5} that
	$$ (y-x)\cdot \xi =\|y-x\|\f {y-x}{\|y-x\|} \cdot \xi\leq (1-\da) \|y-x\|\leq (1-\da) (\|x\|+\|y\|).$$
	On the other hand, by the definition of the set $K$,
	$$(y-x)\cdot \xi=y\cdot \xi-x\cdot \xi\ge (1-\va)\|y\|-\|x\|.$$
	Thus,
	$$(1-\va) \|y\|-\|x\|\leq (1-\da)(\|x\|+\|y\|),$$
	which  implies
	\begin{equation*}
	\|y\|\leq \f {2-\da}{\da-\va} \|x\|.
	\end{equation*}
	\eqref{4-8} then follows by symmetry and the inequality
	$$(1-\va)\|z\|\leq z\cdot\xi \leq \|z\|,\  \ \forall z\in K.$$

	Next, we prove \eqref{4-6}.
	Since
	\[ \omega_{\CE}^r(f_n;K)_{\infty}  \leq  C_r \omega_{\CE}(f_n;K)_\infty,\]
	it is enough to prove \eqref{4-6} for $r=1$.
	Assume that $x, x+s\eta\in K$ for some $s>0$ and $\eta\in\CE$.
	If both $x\cdot \xi$ and $ (x+s\eta)\cdot \xi$ lie in the interval $[0, e^{-n}]$, then
	$\tr_{s\eta} f(x) =0$. If both  $x\cdot \xi$ and $(x+s\eta)\cdot \xi$ lie in the interval $[e^{-n},1]$,  then using  \eqref{4-8} and  the mean value theorem, we have
	\begin{align*}
	|\tr_{s\eta} f(x)|&=\Bl|g_n (x\cdot \xi) -g_n \bigl( (x+s\eta) \cdot \xi\bigr)\Br|\leq C \f { (x+s\eta)\cdot \xi+x\cdot \xi}{x\cdot \xi} \leq C<\infty.
	\end{align*}
	If only one of the numbers $x\cdot \xi$ and $ (x+s\eta)\cdot \xi$ lies in the interval $[e^{-n},1]$, say,
		 $(x+s\eta)\cdot \xi\ge  e^{-n}$ and $x\cdot \xi <e^{-n}$, then there exists a number $0\leq s_0\leq s$ such that  $\xi\cdot (x+s_0 \eta) =e^{-n}$, and hence,
	\begin{align*}
	|\tr_{s\eta} f(x)|&=|f(x+s \eta)-f(x+s_0\eta)|=|\tr_{(s-s_0) \eta} f (x+s_0\eta) |\leq C.
	\end{align*}
	Thus, in all the cases, we have shown that
	$$ \sup_{\eta\in \CE} \max_{x, x+s\eta \in K}  |f(x+s\eta)-f(x)|\leq C.$$
 \eqref{4-6} for all $r\in\NN$ then follows.

	Finally, we prove \eqref{5-1}.
	Let  $Q\in \Pi_{r-1}^d(\CE)$ be such that $E_r(f_n)_{L^\infty(K)} =\|f_n-Q\|_{L^\infty(K)}$.
	Let $h=\f 1 {dr} \xi$. Since $\spn (\CE)=\RR^d$, we have $\Pi_{r-1}^d(\CE) \subset \Pi_{d(r-1)}^d$.  Thus,  for $n>\ln (dr)$,
	\begin{align*}
	C_r \|f_n-Q\|_{L^\infty(K)}&\ge |	\tr_{h}^{dr} (f_n-Q)(0)| =|\tr_h^{dr} f_n(0)|\\
	&=\Bl|\sum_{j=0}^{dr} (-1)^{j+1} \binom{dr}j f_n ( j h)\Br| \ge n-2^{dr} \ln(dr)\\
	&\to\infty,\   \   \ \text{as $n\to\infty$}.
	\end{align*}
	This shows \eqref{5-1}.
	\end{proof}

\subsection{Proof of Theorem ~\ref{thm:enlarged set of directions}}
\begin{proof}
	Let $A_0:=w_r(G; \CE_0)_p$.  Since  $\spn (\CE_0)=\RR^d$, it follows  by Proposition~\ref{pro:properties} that
	$\Pi_{r-1}^d(\CE_0)$   is a finite-dimensional vector space, and $\Pi_{r-1}^d(\CE)\subset \Pi_{r-1}^d(\CE_0)$.
	We claim that
	\begin{equation}\label{eqn:whitney for subspace}
	E_{r}  (f;\EEE)_{L^p(G)} \leq C \og_{\CE}^r (f; G)_p \qtq{for any} f\in \Pi^d_{r-1}(\EEE_0),
	\end{equation}	
	where $C>0$ depends only on $p$, $r$ and $G$.
	Indeed, from  the definition of $\Pi^d_{r-1}(\EEE)$ and Remark ~\ref{rem-1-3}, for any $g\in C(G)$,
	\begin{equation*}
	\og^r_{\CE} (g; G)_p=0 \iff g\in\Pi^d_{r-1}(\EEE) \iff E_{r} (g;\EEE)_{L^p(G)}=0.
	\end{equation*}
	This implies  that both the  mappings
	\begin{equation*}
	g \mapsto \og_{\CE}^r (g; G)_p^{\min\{1,p\}} \qtq{and}
	g \mapsto E_{r} (g;\EEE)_{L^p(G)}^{\min\{1,p\}}
	\end{equation*}
	are quasi-norms on the quotient space $\Pi^d_{r-1}(\EEE_0)/\Pi^d_{r-1}(\EEE)$. Since this space is finite-dimensional, the norms are equivalent and~\eqref{eqn:whitney for subspace} follows.	
	
	Now we can show~\eqref{eqn:whitney for all}. Fix $f\in L^p(G)$. Let $Q \in \Pi^d_{r-1}(\EEE_0)$ be such that
	$$
	\|f-Q\|_{L^p(G)}\le A_0 \omega_{\CE_0}^r(f;G)_p.
	$$
	Next, 	by~\eqref{eqn:whitney for subspace}, there exists  $R\in \Pi^d_{r-1}(\EEE)$  such that
	$$
	E_r(Q; \CE)_p=	\|Q-R\|_{L^p(G)}\le C \omega_{\CE}^r(Q;G)_p.
	$$
	Thus,
	\begin{align*}
	E_{r}(f;\EEE)_{L^p(G)} & \le \|f-R\|_{L^p(G)} \le \|f-Q\|_{L^p(G)}+\|Q-R\|_{L^p(G)} \\
	&\le A_0 \omega_{\CE_0}^r(f;G)_p + C \omega_{\CE}^r(Q;G)_p \\
	&\le A_0\omega_{\CE_0}^r(f;G)_p + C \omega_{\CE}^r(f;G)_p + C\|f-Q\|_{L^p(G)}\\
	&\le C A_0 \omega_{\CE_0}^r(f;G)_p + C \omega_{\CE}^r(f;G)_p \le C (A_0+1)\omega_{\CE}^r(f;G)_p.
	\end{align*}
\end{proof}

\section{ Universal sets of directions for  smooth domains} \label{sec:6}
By Theorem ~\ref{thm-4-1}, one cannot  find a  finite set of directions $\CE$  satisfying  $\spn(\CE)=\RR^d$ that   admits the directional Whitney inequality on  every convex body in $\RR^d$. It is therefore natural to ask that for which domains $G\subset \RR^d$  a  given  set of $d$ linearly independent directions  $\CE\subset\sph$  admits the directional   Whitney inequality.
In this section, we will give an affirmative answer to this question,
proving  that if Lip-2 condition  is imposed  on the domain, then the choice of the directions can be an  arbitrary set of $d$ linearly independent directions.

\begin{defn}\label{def-6-1}
	A compact set $G\subset\RR^d$ is said to be a Lip-2 domain with parameter $L>1$ if there exists a constant $\da>0$ such that $\diam(G)\leq L\da$, and
	each point  $x\in G$ is contained in a  closed   ball $B_x\subset G$ of radius $\da$.
\end{defn}

As is well known, every connected  $C^2$-domain is a Lip-2 domain.

%
%




\begin{thm}\label{cor-7-8-18}
		Let $\mathcal{E}\subset \SS^{d-1}$ be a set of $d$ linearly independent directions with  	
		\begin{equation}\label{7-8-0-18}
		\min_{x\in\SS^{d-1}} \max_{\xi\in \mathcal{E}} |\xi\cdot x|\ge \va_0>0.	
		\end{equation}
	If  $G\subset \RR^d$ is  a connected and  compact Lip-2 domain with parameter $L>1$, then  for any $0<p\leq \infty$, $r\in\NN$ and $f\in L^p(G)$,
	\begin{equation}\label{7-8-18}w_r(G;\CE)_p \leq C(p,d,r,L, \va_0) <\infty.
	\end{equation}
\end{thm}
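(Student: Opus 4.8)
The plan is to reduce, by affine invariance, to the case $\mathcal E=\{e_1,\dots,e_d\}$ of the coordinate directions, and then to build $G$ out of a \emph{bounded} number of elementary pieces, attached one at a time via Lemmas~\ref{lem-5-3-0}, \ref{cor-7-3} and~\ref{rem:whitney key tool seq appl}. Because the number of pieces will be bounded in terms of $d,L,\va_0,r$ alone, the multiplicative factors picked up at each attachment (and in particular the factor $2^{mr}$ of \eqref{3-5-0}) stay under control, which yields \eqref{7-8-18}.

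\emph{Step 1 (reduction to coordinate directions).} By \eqref{1-6-00} one may replace $\mathcal E$ by $\mathcal E\cup(-\mathcal E)$. Let $A$ be the $d\times d$ matrix whose columns are the directions of the original $\mathcal E$; then \eqref{7-8-0-18} says exactly that $\sigma_{\min}(A)\ge\va_0$, while $\sigma_{\max}(A)\le\sqrt d$ trivially. Applying the linear change of variables $x\mapsto Ax$ and invoking \eqref{1-6a} turns $\mathcal E$ into $\{e_1,\dots,e_d\}$. The image $A(G)$ is again compact and connected; and although the image of a Euclidean ball $B\subset G$ is now an ellipsoid, its eccentricity is $\le\sqrt d/\va_0$, so its boundary is smooth with principal curvatures $\le c(d,\va_0)\delta^{-1}$, and hence $A(B)$ contains a Euclidean ball of radius $\ge c'(d,\va_0)\delta$ through any prescribed point of it. Thus $A(G)$ is once more a connected, compact Lip-2 domain, with parameter $L'$ depending only on $d,L,\va_0$, and it suffices to prove \eqref{7-8-18} when $\mathcal E=\{e_1,\dots,e_d\}$, with a constant depending on $p,d,r$ and $L'$.

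\emph{Step 2 (assembling $G$).} Fix $\delta>0$ as in Definition~\ref{def-6-1}, so $\diam G\le L'\delta$ and every $x\in G$ lies in a ball $B_\delta[c(x)]\subset G$. Put a coordinate grid of mesh $\asymp\delta/\sqrt d$ and let $\mathcal Q$ be the set of grid cubes contained in $G$; then $|\mathcal Q|\le C(d,L')$ and every $x\in G$ lies, together with at least one cube of $\mathcal Q$, inside a common ball $B_\delta[c(x)]\subset G$. Starting from one cube $E_0\in\mathcal Q$, for which $w_r(E_0;\mathcal E)_p<\infty$ by Lemma~\ref{lem-5-3-0}, I would enlarge the current set $\Omega$ by three kinds of moves, ordered so as to respect the connectedness of $G$. \emph{(a)} If $\Omega$ contains a cube $Q'\in\mathcal Q$ face-adjacent (across a face $\perp e_i$) to a cube $Q\in\mathcal Q$, then $Q'\cup Q$ is a regular $e_i$-directional domain of parameter $\le3$ whose base is a sub-slab of $Q'\subset\Omega$, and Lemma~\ref{cor-7-3} appends $Q$ to $\Omega$ at the cost of a factor $C(p,r)$. \emph{(b)} Near a boundary point, the interior-ball condition bounds the principal curvatures of $\partial G$ from the inside; choosing the coordinate axis $e_i$ best aligned with the local normal (legitimate since $\mathcal E$ now contains every $\pm e_i$), a neighbourhood in $G$ of that boundary point is a union of a bounded number of regular $e_i$-directional domains of parameter $\lesssim L'$, each with base inside a cube of $\mathcal Q$; these are appended by \eqref{3-9-a}. \emph{(c)} At a pinch — where $G$ contains two balls $B_\delta[c]\subset\Omega$ and $B_\delta[c']\subset G$ meeting only at a point, so that $c'-c=-2\delta e_i$ and the overlap has measure zero — one instead takes $h:=-(2\delta/r)e_i$, notes that $\bigcap_{j=1}^r(\Omega+jh)\supset\bigcap_{j=1}^r B_\delta[c+jh]$ is a non-degenerate lens contained in $B_\delta[c+rh]=B_\delta[c']\subset G$, and appends this lens by \eqref{5-2-eq-18}, thereby obtaining a foothold in the far ball from which moves (a),(b) resume. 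Since $G$ is connected, of diameter $\le L'\delta$, and has the interior-ball property, all of $G$ is reached in $m\le C(d,L',r)$ moves; iterating \eqref{5-2-eq-18} and \eqref{3-9-a} along the resulting decomposition then gives $w_r(G;\mathcal E)_p\le C(p,d,r,L')<\infty$.

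\emph{Where the difficulty lies.} Move (c) is the crux: the clean ``glue two pieces along a common subdomain'' mechanism of Lemma~\ref{cor-7-3} fails precisely at pinch/cusp configurations (two balls tangent at a point, $\operatorname{int}G$ possibly disconnected), and the remedy — taking a step vector of length $\asymp\delta/r$ rather than $\asymp\delta$, so that the $r$-fold backward translates of the new lens stay inside an already-covered ball — is the one place the exact form of Lemma~\ref{rem:whitney key tool seq appl} is used in an essential way; making it work for a tangential pinch whose axis is not one of the $\pm e_i$ requires a short chain of such jumps. The other delicate point is combinatorial-geometric: one must organise moves (a)--(c) so that $G$ is exhausted in a number of steps $m$ bounded \emph{only} by $d,L,\va_0,r$, since the estimate \eqref{3-5-0} degrades like $2^{mr}$; this rests on the interior-ball condition, which simultaneously bounds $|\mathcal Q|$, the curvature of $\partial G$, and the number of pinches.
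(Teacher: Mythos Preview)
Your Step~1 reduction is fine and is essentially what the paper does (the paper keeps $\mathcal{E}$ general and inscribes a parallelepiped with edge directions in $\mathcal{E}$ into a ball, which amounts to the same thing). The genuine gaps are in Step~2, in moves (b) and (c).

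For move (b): the Lip-2 condition does not make $\partial G$ smooth or give it a well-defined normal. A Lip-2 domain can be, for instance, any finite union of closed $\delta$-balls; the boundary of such a set need not be a local graph in any direction, so the phrase ``choosing the coordinate axis $e_i$ best aligned with the local normal'' has no meaning in general, and the asserted decomposition of a boundary neighbourhood into regular $e_i$-directional domains with bases in grid cubes is unjustified. Even in the model case of a single ball $B\subset\RR^d$, $d\ge 3$, with the central cube $[-1/\sqrt d,1/\sqrt d]^d$ inside it, a single round of slab extensions via Lemma~\ref{cor-7-3} along the coordinate axes does not cover $B$ (the base $S$ in Definition~\ref{def-3-4} must be a slab of \emph{uniform} positive thickness over all of $D$, which forces $D$ to stay well inside the projection of the already-covered set). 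A correct iteration is possible but delicate, and you do not supply it.

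For move (c): you yourself note that a tangential pinch whose axis is not one of the $\pm e_i$ is not handled by your construction; ``a short chain of such jumps'' is exactly the missing argument.

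The paper fills both gaps with one device, Lemma~\ref{lem-3-2-0}, which bypasses Lemma~\ref{cor-7-3} entirely and uses Lemma~\ref{lem-3-3} directly. The point is that \eqref{7-8-0-18} gives the angular covering $\sph=\bigcup_{j=1}^{2d}\{x:x\cdot\xi_j\ge\va_0\}$ (with $\xi_{d+j}:=-\xi_j$), so the thin shell $\sigma B_0\setminus B_0$, $\sigma=1+\va_0^2/(4r)$, splits into $2d$ caps, each of which translates back into $B_0$ by a short step along some $\xi_j$; one application of Lemma~\ref{rem:whitney key tool seq appl} then enlarges $B_0$ to $\sigma B_0$. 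Iterating gives any dilate $aB_0$, and a chain of slightly overlapping balls along the segment $[0,x_0]$ connects $B_0$ to any ball $B$ with $B_0\subset B$, regardless of the direction of $x_0$. This is what resolves both the ball-from-cube problem and pinches along arbitrary axes. Combined with a covering lemma (Lemma~\ref{lem-6-3}) writing $G$ as a bounded-length chain of sets $E_j$ with $B_j\subset E_j\subset(1+\va)B_j$ and $E_j\cap E_{j+1}$ containing a small ball, the theorem follows. Your grid-cube scaffolding and Lemma~\ref{cor-7-3} are not needed.
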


		Note that according to  Theorem ~\ref{thm-4-1},
		Theorem ~\ref{cor-7-8-18} is not true without the Lip-2 assumption, at least for $p=\infty$.

%
%
%

The proof of Theorem ~\ref{cor-7-8-18} relies on two technical  lemmas, which will be stated in the following subsection.

\subsection{Two technical lemmas}

Given $a>0$ and  a ball $B=B_\da(x_0)$,  we denote by $a B$ the  dilation $B_{a\da} (x_0)$.

\begin{lem}\label{lem-6-3} Let $G$ be a Lip-2 domain in $\RR^d$ with parameter $L\ge 1$ and constant $\da>0$ as given in Definition ~\ref{def-6-1}.  Let $\va\in (0,1)$ be an arbitrarily given constant. Then  the set  $G$ can be represented   as a finite union $G=\bigcup_{j=1}^m E_j$ of possibly repeated subsets $E_j\subset G$ such that
\begin{enumerate}[\rm (i)]
	\item $m\leq C(d,L/\va)<\infty$;
	\item for each $1\leq j\leq m$,   there exists  an open ball $B_j$ of radius $\da$ such that
	$B_j \subset E_j\subset (1+\va) B_j$;
		\item  $E_j\cap E_{j+1}$ contains an open ball of radius $ \va \da/4$ for each $1\leq j<m$.
\end{enumerate}

\end{lem}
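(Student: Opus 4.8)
The plan is to build the cover $G=\bigcup_{j=1}^m E_j$ by first constructing a suitable finite chain of centers and then attaching to each center the ball of radius $\da$ guaranteed by the Lip-2 property. Concretely, I would start with the set of centers of the inscribed balls: for each $x\in G$ fix a ball $B_x\subset G$ of radius $\da$, and let $c(x)$ be its center, so that $\|c(x)-x\|\le \da$ and hence every point of $G$ lies within $2\da$ of the set $C:=\{c(x):x\in G\}$. Since $\diam(G)\le L\da$, the set $C$ lies in a region of diameter at most $(L+2)\da$. I would then choose a maximal $\eta\da$-separated subset $\{p_1,\dots,p_N\}$ of $G$ (with $\eta$ a small absolute multiple of $\va$, to be pinned down), which by a standard volume/packing argument has $N\le C(d)(L/\va)^d$; maximality ensures the balls $B_{\eta\da}(p_i)$ cover $G$, and each $B_{\da/2}(c(p_i))$ is a genuine ball inside $G$ that is $(\va/\text{const})$-close to containing everything we need.

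The second ingredient is to turn this cover into a \emph{chain} with the overlap property (iii). Here I would use connectedness of $G$: form the graph on $\{p_1,\dots,p_N\}$ joining two centers whenever the corresponding enlarged balls (say, $(1+\va)B_{\da}$ around the inscribed-ball centers) overlap in a ball of radius $\ge \va\da/4$. Because the inscribed balls all have the same radius $\da$ and $G$ is connected, any two centers within $O(\da)$ of a continuous path in $G$ are joined by a short edge, so this graph is connected with diameter $\le C(d,L/\va)$. I would then take a spanning tree, perform a depth-first traversal (an Euler-tour of the tree), and list the balls in the order visited; repeating a ball when the traversal backtracks is exactly what the phrase ``possibly repeated subsets'' in the statement permits. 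Consecutive entries in this list are then adjacent in the graph, giving (iii), and the total length $m$ is at most twice the number of tree edges, hence $m\le 2N\le C(d,L/\va)$, giving (i). For (ii), I would set $B_j$ to be the open ball of radius $\da$ centered at the relevant inscribed-ball center and $E_j:=G\cap (1+\va)B_j$; then $B_j\subset G$ by the Lip-2 property gives $B_j\subset E_j$ (here I use $B_j$ as the closed or open ball consistently, matching $B_x\subset G$), and $E_j\subset(1+\va)B_j$ is immediate from the definition.

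The main obstacle I anticipate is reconciling the two competing demands on the separation parameter $\eta$: it must be small enough (comparable to $\va$) that consecutive enlarged balls $(1+\va)B_j$ genuinely overlap in a ball of radius $\va\da/4$—so one needs the centers of adjacent balls to be within roughly $(2+\va/2)\da$ of each other, and the intersection of two radius-$(1+\va)\da$ balls whose centers are that close does contain a ball of radius $\sim\va\da/4$, which forces a careful but elementary geometric computation—while simultaneously $\eta$ cannot be taken so small that the packing bound $N\le C(d)(L/\va)^d$ degrades; and one must check that the adjacency graph really is connected, which is where the uniform radius $\da$ of the inscribed balls (not just a lower bound on curvature) is essential. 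Another point requiring care is that a continuous path in $G$ between two points need not stay in any single ball, so connectedness of the graph should be argued by covering the path with finitely many of the $B_{\eta\da}(p_i)$ and checking consecutive ones along the path are adjacent. Once these geometric estimates are in place the combinatorial part (spanning tree, Euler tour, repetition on backtracking) is routine, and the bounds (i)--(iii) follow with all constants of the claimed form $C(d,L/\va)$.
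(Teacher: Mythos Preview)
Your plan is essentially the paper's proof: cover $G$ by balls of radius $\sim\va\da$ centered at points of $G$, replace each by the $(1+\va)$-inflation (intersected with $G$) of the inscribed $\da$-ball guaranteed by the Lip-2 property, and then linearize this finite open cover of the connected set $G$ into a chain with consecutive overlaps; the paper does the chaining by the crude bound $m\le 2m_0^2$ rather than your spanning-tree/Euler-tour, but both yield $m\le C(d,L/\va)$. The one point to watch in your overlap computation is that $E_j\cap E_{j+1}=G\cap(1+\va)B_j\cap(1+\va)B_{j+1}$ must contain a ball lying in $G$, and the symmetric lens $(1+\va)B_j\cap(1+\va)B_{j+1}$ need not be in $G$; the paper handles this by noting $E_j\cap E_{j+1}\supset (1+\va)B_j\cap B_{j+1}$ with $B_{j+1}\subset G$, and an elementary calculation shows this asymmetric intersection contains a ball of radius $\va\da/4$ once the centers are within $(2+\va/2)\da$.
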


\begin{proof}
	First, we  cover the domain  $G$ with $m_0\leq C_d (L/\va)^d$ open balls $B_{\va\da/4} (y_j)$, $j=1,\dots, m_0$ with   centers   $y_1,\dots, y_{m_0}\in G$.   For each $j$, we can find   an open ball $B_j=B_\da(x_j)$  of radius $\da$ such that $x_j\in  \overline{B_j}\subset G$,
	implying that $B_{\va\da /4} (y_j) \subset (1+\f \va4) B_j$. Thus,
	$$G= \bigcup_{j=1}^{m_0}  G_j,\   \ \text{where}\  \  G_j :=( (1+\f \va 4) B_j)\cap G.$$
	Since $G$ is connected and each $G_j$ is open relative to the topology of $G$,
	we  can form a sequence of sets $\{\wh E_j\}_{j=1}^{m}$ from possibly repeated copies of  the sets $G_j$, $1\leq j\leq m_0$   such that  	$m\leq 2m_0^2$,
	$$
	G=\bigcup_{j=1}^m \wh E_j \   \  \text{and}\   \   \wh E_j\cap \wh E_{j+1} \neq \emptyset \   \ \  j=1,2,\dots, m-1.$$
	
	Next, for each $1\leq j\leq m$, we define
	$$G_j^\ast := \bigl( (1+\va) B_j\bigr) \cap G.$$
	Then
	$$ B_j \subset G_j^\ast \subset (1+\va)B_j.$$
	If $G_i\cap G_j\neq \emptyset$, then
	$$\bigl((1+\f \va 4) B_i\bigr)\cap \bigl((1+\f \va 4) B_j\bigr)\neq \emptyset,$$
	and hence
	$\|x_i-x_j\|\leq (2+\f \va 2) \da$. Since
	$$ G_i^\ast\cap G_j^\ast =(1+\va) B_i\cap G_j^\ast \supset (1+\va)B_i \cap B_j,$$
	this implies that
	$G_i^\ast\cap G_j^\ast$ contains a ball of radius $\va \da/4$ if $G_i\cap G_j\neq \emptyset$.

	Finally, to complete the proof,  we define $E_j=G_i^\ast$ if $\wh {E}_j =G_i$.  	
\end{proof}

 \begin{lem}\label{lem-3-2-0} Let   $\mathcal{E}\subset \SS^{d-1}$ be a set  of $d$ linearly independent directions such that  $\min_{x\in\SS^{d-1}} \max_{\xi\in \mathcal{E}} |x\cdot \xi|\ge \va_0>0$.
 	 Let $r\in\NN$ and  $\sa:=1+\f {\va_0^2}{4r}$. Let $(S_0, E)$ be a pair of bounded measurable subsets of $\RR^d$. Let $L\ge 1$ be a given parameter.  Assume that there exists an open ball $B$ of radius $L\da$ for some constant $\da>0$ such that $B \subset E\subset \sa B$ and $ S_0\cap B$ contains an open ball $B_0$ of radius $\da$.
 	Then for any $0<p\leq \infty$,
 	\begin{equation}\label{6-5-0}
 	w_r(S_0\cup E;\CE)_p \leq C(L,d,p,r,\va_0) \Bl( 1+w_r(S_0; \CE)_p\Br).
 	\end{equation}
 \end{lem}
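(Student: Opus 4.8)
The plan is to present $S_0\cup E$ as a finite union and iterate the estimates of Section~\ref{sec:prliminary}. By the affine invariance \eqref{1-6a} (a translation and a dilation leave $\CE$ unchanged) I may assume $B=B_{L\da}(0)$, and by \eqref{1-6-00} I may replace $\CE$ by $\CE\cup(-\CE)$, so throughout we are free to translate along any $\xi\in\CE$ in either sense. Write $B_0=B_\da(x_0)\subseteq S_0\cap B$, so $\|x_0\|\le(L-1)\da$. I want to build $S_0\cup E=\Omega_0\cup E_1\cup\dots\cup E_N$ with $N=N(L,d,r,\va_0)$, $\Omega_0=S_0$, $\Omega_k=\Omega_{k-1}\cup E_k$, where each $E_k$ is of one of two kinds: (i) a regular $\xi_{i_k}$-directional domain (Definition~\ref{def-3-4}) contained in $B$, with base $S_k\subseteq\Omega_{k-1}$ and parameter $L_k\le C(L,d,r,\va_0)$; or (ii) $E_k=G_k\cap(S_0\cup E)$ with $G_k\subseteq\bigcap_{j=1}^r(\Omega_{k-1}+jh_k)$ for some $h_k$ with $h_k/\|h_k\|\in\CE\cup(-\CE)$. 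Given such a decomposition, iterating Lemma~\ref{cor-7-3} over the steps of kind (i) and Lemma~\ref{lem-3-3} (equivalently Lemma~\ref{rem:whitney key tool seq appl}) over those of kind (ii), and using that $w_r(S_0;\CE)_p<\infty$ (otherwise \eqref{6-5-0} is vacuous), yields
$$
w_r(S_0\cup E;\CE)_p\le w_r(\Omega_N;\CE)_p\le C(L,d,p,r,\va_0)\bigl(1+w_r(S_0;\CE)_p\bigr).
$$

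The steps of kind (i) cover $B$. Here I use that $B$ is convex and that, by the hypothesis $\min_{x\in\sph}\max_{\xi\in\CE}|x\cdot\xi|\ge\va_0$, every point $q\in B$ lies on a chord of $B$ in some direction $\xi_i\in\CE\cup(-\CE)$ of length $\ge 2\va_0 L\da$ (for $q\in\partial B$ the chord in a direction $\xi_i$ with $|(q/\|q\|)\cdot\xi_i|\ge\va_0$ has length $2L\da\,|(q/\|q\|)\cdot\xi_i|\ge2\va_0 L\da$, and for interior $q$ it is only longer). Slicing $B$ into chord-columns $G_{\xi_i,D}=\{x+y\xi_i:x\in D\}\cap B$ over small $(d-1)$-dimensional cross-sections $D$ on which the chord length stays within a bounded factor gives regular $\xi_i$-directional domains of parameter $\le C(L,d,r,\va_0)$; since each such column is long in the direction $\xi_i$, Lemma~\ref{cor-7-3} (whose proof handles that direction by the one-variable Whitney inequality) absorbs it without loss. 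Ordering these $N_1=N_1(L,d,\va_0)$ columns as a flood fill — the first based inside $B_0\subseteq S_0$, each subsequent one based inside a $\da$-scale overlap blob already covered — furnishes the steps of kind (i); note $G_{\xi_i,D}\subseteq B\subseteq E$, so here $E_k=G_k$.

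The steps of kind (ii) cover $\overline{\sa B}\setminus B$, and hence the remainder of $E$ (recall $B\subseteq E\subseteq\sa B$). For $q\in\overline{\sa B}\setminus B$ one has $\|q\|\le\sa L\da$, hence
$$1-\Bigl(\tfrac{L\da}{\|q\|}\Bigr)^2\le 1-\tfrac1{\sa^2}<\va_0^2,$$
the last inequality being exactly $\sa^2(1-\va_0^2)<1$, which holds for $\sa=1+\va_0^2/(4r)$ since then $\sa^2<1+\va_0^2\le 1/(1-\va_0^2)$. Choosing $\xi_i\in\CE\cup(-\CE)$ with $((q/\|q\|)\cdot\xi_i)^2\ge\va_0^2$ (possible by hypothesis) then forces $((q/\|q\|)\cdot\xi_i)^2>1-(L\da/\|q\|)^2$, i.e. the line through $q$ in direction $\xi_i$ meets $B$; so $q$ lies on the prolongation, inside $\sa B$, of a chord of $B$ in direction $\xi_i$, and that prolongation extends a length $\le L\da\sqrt{\sa^2-1}\le\va_0 L\da/\sqrt{r}$ beyond $\partial B$. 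Thus $\overline{\sa B}\setminus B$ is covered by finitely many such prolongations, each reached from a column of kind (i) already constructed by boundedly many steps $G_k\subseteq\bigcap_{j=1}^r(\Omega_{k-1}+jh_k)$ with $h_k$ along $\xi_i$ — again no shrinkage, the column being long in $\xi_i$ — and taking $E_k=G_k\cap(S_0\cup E)\subseteq\bigcap_{j=1}^r(\Omega_{k-1}+jh_k)$ makes Lemma~\ref{lem-3-3} applicable for an arbitrary measurable $E$. This is precisely where the value $\sa=1+\va_0^2/(4r)$ is used.

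The main obstacle is the explicit flood-fill decomposition in the previous two paragraphs: verifying that $B$, and then the shell, can genuinely be covered by a number of columns (resp. prolongations) bounded in terms of $L,d,r,\va_0$, that each is a regular directional domain (resp. a legitimate $\bigcap_j(\cdot+jh_k)$-extension) of bounded parameter, and that they can be ordered so that every base lies in the region already built. Keeping the cross-sections small enough to be seeded from $B_0$ while keeping the parameters bounded, and covering right up to $\partial B$ and $\partial(\sa B)$ using only the transversal directions supplied by the hypothesis, is the part demanding care; the rest is a routine concatenation of Lemma~\ref{cor-7-3} and Lemma~\ref{lem-3-3}, giving \eqref{6-5-0}.
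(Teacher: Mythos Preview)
Your route is genuinely different from the paper's, and while the overall strategy is plausible, the hard part you flag is exactly the gap.

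The paper never uses Lemma~\ref{cor-7-3}; it works entirely with Lemma~\ref{lem-3-3} (via Lemma~\ref{rem:whitney key tool seq appl}) in three clean steps. \emph{Step~1} handles the special case $B=B_0$: writing $\CE\cup(-\CE)=\{\xi_1,\dots,\xi_{2d}\}$ and decomposing $\sa B_0$ into the $2d$ angular sectors $A_j=\{x:\|x\|<\sa,\ x\cdot\xi_j\ge\va_0\|x\|\}$, a direct computation (this is precisely where $\sa=1+\va_0^2/(4r)$ enters) shows $A_j-ih_j\subset B_0$ for $h_j=\tfrac{\va_0}{2r}\xi_j$ and $1\le i\le r$, so one application of Lemma~\ref{rem:whitney key tool seq appl} with $m=2d$ finishes. \emph{Step~2} iterates Step~1 along the geometric sequence $B_0,\sa B_0,\sa^2B_0,\dots$ to pass from $B_0$ to any concentric dilation $aB_0$. \emph{Step~3} reduces the general case to $w_r(S_0\cup B;\CE)_p$: if the centers of $B_0$ and $B$ are close this is Step~2, and otherwise one walks a chain of overlapping balls of radius $\da/\sa$ from the center of $B_0$ to the center of $B$, applying Step~1 at each link and Step~2 at the end. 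No columns, no flood fill, and the total number of pieces is explicitly bounded in terms of $L,d,r,\va_0$.

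By contrast, your plan mixes Lemma~\ref{cor-7-3} for chord-columns inside $B$ with Lemma~\ref{lem-3-3} for the shell. The shell argument is fine (your inequality $\sa^2(1-\va_0^2)<1$ is correct and the prolongation bound $L\da\sqrt{\sa^2-1}$ is right). The difficulty is the flood fill over $B$: for Lemma~\ref{cor-7-3} each new column needs a \emph{base} in the sense of Definition~\ref{def-3-4}, i.e.\ a full slab $\{(x,y):x\in D,\ H(x)-\da'\le y\le H(x)+\da'\}$, lying in the region already built---an ``overlap blob'' is not enough. When adjacent columns carry different directions $\xi_i$, producing such a slab inside their intersection with controlled parameter is delicate, and you have not supplied it. This is not fatal---one can likely force all cross-sections $D$ to be $(d-1)$-cubes of side $\sim\da$ and arrange an explicit ordering---but carrying it out is essentially the whole proof, and the paper's sector-and-chain argument sidesteps it entirely.
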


 The proof of Lemma~\ref{lem-3-2-0} is quite technical, so we postpone it   until Subsection~\ref{subsec-6-3}. For the moment, we take it for granted and proceed with the proof of Theorem ~\ref{cor-7-8-18} in the next subsection.
 \subsection{Proof of Theorem ~\ref{cor-7-8-18}}

 \begin{proof}

 	First, we prove that  $w_r(B; \CE)_p\leq C(p, d,r,\va_0)<\infty$  for every open ball $B\subset \RR^d$.
 	Since  dilations   and translations  do not change the directions in the set $\mathcal{E}$,
 	we  may  assume  that $B=B_1(0)$.
 	
 	Let $\CE=\{\xi_1,\dots, \xi_d\}$. Let $A$ be the $d\times d$ matrix whose $j$-th column vector is $\xi_j$. Then by~\eqref{7-8-0-18}
 	\[
 	\|A^tx\|=\Bigl( \sum_{j=1}^d |x\cdot\xi_j|^2\Bigr)^{\f12} \ge \eps_0\|x\|, \quad \forall x\in\R^{d},
 	\]
 	so from the fact that a matrix and its transpose have the same singular values, we get
 	\begin{equation*}
 	\va_0\|x\|\leq \Bl\|\sum_{j=1}^{d} x_j \xi_j\Br\|\leq \sqrt{d} \|x\|,\   \  \forall x=(x_1,\dots, x_{d})\in\R^{d}.
 	\end{equation*}
 	In particular, this implies that the parallelepiped
 	$$H:=\Bl\{ \sum_{j=1}^{d} x_j \xi_j:\   \  x=(x_1,\dots, x_{d})\in \Bl[-\f 1 {d},\f 1{d}\Br]^{d}\Br\},$$
 	whose edge directions lie in the set $\mathcal{E}$,  satisfies
 	$ B_{\va_0/d}[0]\subset H\subset B_1[0].$
 	Thus, applying Lemma~\ref{lem-3-2-0} to the pair of sets  $(S_0, E)=(H, B)$  and the ball  $B=B_1(0)$,  we obtain
 	$$w_r(B;\CE)_p =w_r(H\cup B; \CE)_p \leq C(p,r,d,\va_0) w_r(H;\CE)_p.$$
 	By  Lemma~\ref{lem-5-3-0}, this implies that $w_r(B; \CE)_p\leq C(p, d,r,\va_0)<\infty$.

 	Next,  we set  $\va:=\f {\va_0^2} {4r}$,  and  apply Lemma ~\ref{lem-6-3}   to write  the domain   $G$  as a finite union $G=\bigcup_{j=1}^m E_j$ such that  $m\leq C(d,r,\va_0)<\infty$, $B_j \subset E_j\subset (1+\va) B_j$ for an open ball $B_j$ of radius $\da>0$,
 	and $E_j\cap E_{j+1}$ contains an open ball of radius $ \va \da/4$.
 	Setting $E_0:=B_1$, and  applying   Lemma~\ref{lem-3-2-0} iteratively to the pairs of sets  $$(S_0, E):=\Bl(\bigcup_{i=0}^j E_i, E_{j+1}\Br)\   \ \text{ for}\  \ j=0,1,\dots, m-1,$$ we obtain
 	\begin{equation*}
 w_r\Bl(\bigcup_{i=0}^{j+1}  E_i;\CE\Br)_p \leq C(d,p,r,\va_0) \Bl( 1+w_r\Bl(\bigcup_{i=0}^j E_i; \CE\Br)_p\Br),\  \ j=0,1,\dots, m-1.
 \end{equation*}
 This implies that\begin{align*}
w_r(G;\CE)_p& =w_r\Bl(\bigcup_{i=0}^{m}  E_i;\CE\Br)_p\leq C(d,p,r,\va_0,L)\Bl( 1+w_r(B_1; \CE)_p\Br)\\
&\leq C(d,p,r,\va_0,L)<\infty.
 \end{align*}
 	
 \end{proof}

 \subsection{Proof of Lemma~\ref{lem-3-2-0}}\label{subsec-6-3}

 \begin{proof} 	Without loss of generality, we assume that $w_r(S_0; \CE)_p<\infty$ (otherwise there's nothing to prove).
 Let $\CE=\{\xi_1,\dots, \xi_d\}$, and let $\xi_{d+j}=-\xi_j$ for $1\leq j\leq d$. Then
 	\begin{equation}\label{6-6a}
 	\sph:=\bigcup_{j=1}^{2d}\{ \xi\in\sph:\  \ \xi\cdot \xi_j\ge \va_0\}.
 	\end{equation}
 	We break the proof into several steps.

 	In the first step,  we prove that   for each pair $(S_0, E)$ of measurable  sets satisfying  $B_0\subset E\subset \sa B_0$ and $B_0\subset S_0$,
 	\begin{equation}\label{6-7a}
 	w_r( S_0\cup E; \CE)_p \leq C(p, r, \va_0,d)\Bl( w_r(S_0; \CE)_p+1\Br),
 	\end{equation}
 	which particularly implies \eqref{6-5-0}   for the special case of  $B=B_0$.
 	
 	Without loss of generality, we may assume that  $B_0=B_1(0)$.
 	Using \eqref{6-6a},    we may decompose the ball  $\sa B_0$ as
 	$\sa B_0  =\bigcup_{j=1}^{2d} A_j$, where
 	\begin{equation*}\label{key-13-5}
 	A_j:=\Bl\{x\in \RR^{d}:    \|x\|<\sa,  \      \  x \cdot \xi_j\ge \va_0\|x\|\Br\}.
 	\end{equation*}
 	We  claim that for each $1\leq j\leq 2d$ and    $h_j:= \f {\va_0}{2r}  \xi_j$,
 	\begin{equation}\label{3-3-0}
 	\bigcup_{x\in A_j} [x-rh_j, x-h_j]\subset B_0.
 	\end{equation}
 	Indeed,   each   $y\in 	[x-rh_j, x-h_j]$  can be written   in the form
 	$y= x  -u \xi_j$ with   $ \f {\va_0} {2r}\leq u\leq \f {\va_0} 2$. Since $x\in A_j$, we have
 	\begin{align*}
 	\|y\|^2& \leq \|x\|^2 +u^2 -2\|x\| u\va_0
 	\leq (\sa- u\va_0)^2 +u^2 < 1-\f {u\va_0}2 +u^2 \leq 1,
 	\end{align*}
 	proving the claim \eqref{3-3-0}.
 	
 	Since $E\subset \sa B_0=\bigcup_{j=1}^{2d} A_j$, we may decompose $E\cup S_0$ as $$E\cup S_0 =\bigcup_{j=0}^{2d} E_j,\   \ \text{where}\  \   E_0:=S_0,\   \ E_j :=E\cap A_j\    \text{for $j\ge 1$}.$$  Let
 	$\Og_k:=\bigcup_{j=0}^k E_j$ for $0\leq k\leq 2d$.
 	Using the claim \eqref{3-3-0}, we obtain that for any $1\leq k\leq m$,
 	$$\bigcup_{x\in E_k} [x-rh_k, x-h_k] \subset B_0\subset S_0\subset \Og_{k-1},$$
 	which, in particular, implies that the condition \eqref{7-2-0-18} is satisfied.  Thus, using Lemma ~\ref{rem:whitney key tool seq appl} and \eqref{3-5-0}, we deduce the desired estimate
 	\eqref{6-7a}.

 	In the second step, we prove that for any ball $B_0\subset S_0$ and  constant  $a\ge 1$,
 	\begin{equation}\label{6-9-0}
 	w_r(S_0\cup a B_0;\CE)_p \leq C(p, r, d, \va_0, a) \bigl(w_r( S_0; \CE)_p+1\bigr).
 	\end{equation}
 	To this end, let $\ell$ be a nonnegative integer such that $\sa^\ell \leq a <\sa^{\ell+1}$. Let
 	$$S_{j,0}:= S_0 \cup \sa^{j-1} B_0,\   \ E_j:=\sa^j B_0,\   \ j=1,2,\dots, \ell,$$
 	and let
 	$$S_{\ell+1, 0}: =S_0\cup \sa^\ell B_0,    \ E_{\ell+1} := a B_0.$$
 	Note that for $1\leq j\leq \ell+1$,
 	$$ \sa^{j-1} B_0 \subset E_j \subset \sa^j B_0 \  \text{and}\   \ \sa^{j-1}B_0 \subset S_{j,0}.$$
 	Thus, applying     \eqref{6-7a} iteratively to the ball  $\sa^{j-1} B_0$ and the pair of sets $(S_{j,0},   E_j)$
 	for $j=1,2,\dots, \ell+1$,  we   obtain
 	\begin{equation*}
 	w_r(S_{j,0}\cup E_j;  \CE)_p \leq C(p, r, \va_0,d) \Bl( w_r(S_{j,0}; \CE)_p+1\Br),\   \ j=1,2,\dots, \ell+1.
 	\end{equation*}
 	Since $S_{1,0}=S_0$, the desired estimate  \eqref{6-9-0} then follows.

 	Finally, in the last step, we prove \eqref{6-5-0} in the general case, where $B=B_{L\da} (x_0)$ and
 	the center $x_0$  may not be  the same as that of $B_0$. Without loss of generality, we  assume that $B_0=B_{\da}(0)$.
 	
 	Since
 	\[ B\subset E\subset \sa B, \    B\subset S_0\cup B \  \text{ and}\  S_0\cup E =(S_0\cup B)\cup E, \]
 	it follows by the  already proven estimate  \eqref{6-7a} that
 	\[
 	w_r(E\cup S_0; \CE)_p \leq C(p, r, \va_0,d)\Bl( w_r(S_0\cup B; \CE)_p+1\Br).
 	\]
 	Thus, it is enough to show that \begin{equation}\label{6-10a}
 	w_r(S_0\cup B; \CE)_p \leq C(p, r, \va_0,d, L)\Bl( w_r(S_0; \CE)_p+1\Br).
 	\end{equation}
 	We consider
 	the following two cases:\\

 	{\bf Case 1.}\   \  $\|x_0\|\leq \f \da {2}$.\\

 	In this case,    $$\wt B:=B_{\da/2} (x_0) \subset B_\da(0)= B_0\subset S_0.$$
 	Since $B=(2L) \wt B$, \eqref{6-10a} follows from \eqref{6-9-0}.\\

 	{\bf Case 2.}\   \  $\|x_0\|> \f \da {2}$.\\

 	In this case, we will construct a set $K$ such that $S_0\cup \wh B\subset K\subset S_0\cup B$ with
 	$\wh B:=B_{\da/\sa} (x_0)$, and
 	\begin{equation}\label{6-11a}
 	w_r(K; \CE)_p \leq C(p, r, \va_0,d, L)\Bl( w_r(S_0; \CE)_p+1\Br).
 	\end{equation}
 	For the moment, we assume such a set $K$ is constructed and proceed with the proof of  \eqref{6-10a}. Since $(L\sa) \wh{ B} =B$ and $K\cup B=S_0\cup B$, we obtain from \eqref{6-9-0} that
 	\[ w_r(S_0\cup B; \CE)_p=w_r(K\cup (L\sa) \wh B; \CE)_p \leq C(p, r, \va_0,d, L)\bigl( w_r(K; \CE)_p+1\bigr),\]
 	which combined  with \eqref{6-11a} implies  \eqref{6-10a}.

 	It remains to construct the  set $K$.	Since $B_\da(0)\subset  B$,  we have    $\f \da{ 2} <\|x_0\|\leq (L-1)\da$.	
 	Let  $n_0$ be a positive integer $< L\sa /(\sa-1)$  such that
 	$$ n_0 \f{(\sa-1)\da}{\sa} < \|x_0\|\leq (n_0+1) \f {(\sa-1) \da}{\sa}.$$
 	Let
 	$$  y_j =j\f{(\sa-1)\da}{\sa} \f {x_0}{ \|x_0\|},\  \ j=0,1,\dots, n_0$$ be the equally spaced points on  the line segment $[0, x_0]$.
 	Define  $\wh B_j :=B_{\da/\sa}(y_j)$ for $j=0,1,\dots, n_0$.  A straightforward calculation shows that     \begin{equation}\label{6-12a}
 	B_{\da/\sa} (x_0)\subset \sa \wh B_{n_0} \subset  B\   \ \text{and}\  \ \wh B_j\subset \sa \wh B_{j-1} \subset B\   \ \text{for $j=1,\dots, n_0$}.
 	\end{equation}
 	Define  $K_0=S_0$ and
 	$K_j:=K_{j-1}\cup \sa \wh B_j$ for $j=1,\dots, n_0$.  	
 	Since  $\sa \wh B_0 =B_0\subset K_0$ and
 	$\wh B_j\subset \sa \wh B_{j-1}\subset K_{j-1}$ for $1\leq j\leq n_0$, it follows by \eqref{6-10a}  that
 	\[ w_r(K_j;\CE)_p=w_r(K_{j-1} \cup \sa \wh B_j;\CE)_p \leq C(p,r,d,\va_0) \Bl(w_r(K_{j-1}; \CE)_p+1\Br),\]
 	where $j=1,2,\dots, n_0$. This implies that
 	\[ 	w_r(K_{n_0};\CE)_p\leq C(p,r,d,\va_0, L) \Bl( w_r(S_{0}; \CE)_p+1\Br),\]
 	and \eqref{6-11a} is satisfied with  $K:=K_{n_0}$. Furthermore, using \eqref{6-12a}, we have
 	$$S_0\subset K_{n_0}=\bigcup_{j=0}^{n_0} (S_0\cup \sa \wh B_j) \subset S_0\cup B\    \ \text{and}\   \ B_{\da/\sa} (x_0) \subset \sa \wh B_{n_0}\subset  K_{n_0}. $$
 	Thus, the set $K=K_{n_0}$ has all the desired  properties.

 \end{proof}

\section{Connection with X-ray numbers of convex bodies}
\label{sec:7}

Let $d\ge 2$. For each convex body $G\subset \R^d$,  we define $\cN_d(G)$ to be the smallest number $n\in\NN$ for which there exists a set  of $n$ directions $\CE$ such that $\spn(\CE)=\RR^d$, and  $ w_r(G; \CE)_p<\infty$ for all $r\in\NN$ and $p>0$.  Clearly,    $\cN_d(G)\ge d$ for every convex body $G\subset\RR^d$. Moreover,  by  Theorem ~\ref{cor-7-8-18},  if the convex body $G\subset \RR^d$ is Lip-2, then
 $\cN_d(G)=d$.

Next, we   define  $\cN_d$ to be the smallest number  $n\in\NN$   such that for   every  convex body $G\subset\RR^d$, there exists a set of $n$ directions $\CE$ for which  $\spn(\CE)=\RR^d$ and
 $w_r(G;\CE)_p<\infty$    for all  $0<p\leq \infty$ and  $r\in\NN$.
 According to   Theorem ~\ref{thm:enlarged set of directions}, we have \begin{equation}
\cN_d=\sup_G \cN_d(G),
\end{equation}
where the supremum is taken over all convex bodies $G\subset \RR^d$.
Moreover, by Remark ~\ref{rem-4-3} and John's theorem for convex bodies,  we have
\begin{equation}\label{7-2}
d\leq \cN_d \leq (cd)^{2(d-1)}.
\end{equation}

The upper estimate in \eqref{7-2} is far from being optimal, especially  as $d\to \infty$.
In this section, we will show that such an   estimate can be significantly improved, proving  that $\cN_2=\cN_2(G)=2$ for every planar convex body $G\subset \RR^2$, and $\cN_d(G)=d$ for $d\ge 3$ and every  ``almost smooth'' convex body $G\subset \RR^d$.  A crucial ingredient in our approach for $d\ge 3$ is to   establish a connection between   the number $\cN_d$ with  the    X-raying numbers of  convex bodies from convex geometry.

We start with   the following result for $d=2$, which implies that $\cN_2=\cN_2(G)=2$ for every planar convex body $G\subset \RR^2$.

\begin{thm}\label{cor-7-7}
	If $G$ is a convex body in $\R^2$, then there exist two linearly independent vectors $\xi_1, \xi_2\in\SS^1$  such that for all $0<p\leq \infty$ and  $r\in\NN$,
	 	\begin{equation}\label{key-13-11}
	 	w_r(G; \{\xi_1,\xi_2\})_p \leq C(p,r)<\infty,
	\end{equation}
	where $C(p,r)>0$ is a constant depending only on $p$ and $r$.
\end{thm}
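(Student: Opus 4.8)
The plan is to reduce the problem for an arbitrary planar convex body $G$ to a situation where the tools developed in Section~\ref{sec:prliminary}, together with the $C^2$-result of Section~\ref{sec:6}, can be applied. The key geometric idea is that a planar convex body is smooth (has a unique supporting line) at all but countably many boundary points, and its boundary fails to be Lip-2 only near finitely many ``sharp corners.'' So the plan is: (1) normalize $G$ by an affine map so that, using \eqref{1-6a}, it suffices to prove the estimate for a conveniently placed copy; (2) choose the two directions $\xi_1,\xi_2$ adapted to the corners of $G$; (3) decompose $G$ into a central Lip-2 piece plus finitely many ``corner'' pieces, each of which is a regular $\xi$-directional domain with a controlled parameter; (4) glue the pieces using Lemma~\ref{cor-7-3} (or Lemma~\ref{rem:whitney key tool seq appl}) and close up with Theorem~\ref{cor-7-8-18} applied to the central piece.

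In more detail: first, since $G$ is a planar convex body, one may place a disk $B_\rho[z_0]\subset G$ of maximal radius (after an affine normalization we may take it to be $B_1[0]$, using \eqref{1-6a}). The boundary $\partial G$ has at most finitely many points where the exterior normal cone has opening angle bounded away from $\pi$ by a fixed amount; near all other boundary points $\partial G$ admits interior tangent disks of a uniform radius $\delta>0$, so the ``bulk'' of $G$, say $G_0 := G \setminus \bigcup_i W_i$ where $W_i$ are small open neighborhoods of the bad corners, is a connected compact Lip-2 domain. For each corner with vertex $v_i$, the local piece $W_i\cap G$ is contained in a cone; one chooses a direction $\xi_i$ pointing roughly along the bisector of that cone (into $G$), so that $W_i\cap G$ becomes a regular $\xi_i$-directional domain (in the sense of Definition~\ref{def-3-4}) with parameter $L$ depending only on $G$, whose base sits inside $G_0$. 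The only thing that must be arranged is that the \emph{finitely many} directions $\{\xi_i\}$ together with any two linearly independent reference directions span $\RR^2$ — but here we must be frugal: we are allowed only two directions total. This forces the real work.

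The main obstacle, then, is exactly the constraint that only \emph{two} directions are permitted, whereas several corners may point in genuinely different directions. The resolution I would pursue: observe that for a planar convex body, one can always pick \emph{one} direction $\xi_1$ such that \emph{every} corner cone of $G$ contains either $\xi_1$ or $-\xi_1$ in its interior — indeed a convex body in the plane can have at most one pair of antipodal ``corner'' directions that are mutually blocking, and more carefully, by a compactness/rotation argument one shows there is a direction transversal to all corners simultaneously (this is essentially the $d=2$ case of the X-ray phenomenon developed later in Section~\ref{sec:7}: two directions suffice to X-ray any planar convex body). Concretely, choose $\xi_1$ so that the line through any corner vertex in direction $\xi_1$ enters the interior of $G$ on at least one side; then each corner piece is a regular $(\pm\xi_1)$-directional domain, and by \eqref{1-6-00} the sign is irrelevant. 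Pick $\xi_2$ to be any direction independent of $\xi_1$ and such that $\{\xi_1,\xi_2\}$ also makes the central Lip-2 piece accessible; Theorem~\ref{cor-7-8-18} requires $\min_{x\in\SS^1}\max_{\xi\in\{\xi_1,\xi_2\}}|\xi\cdot x|\ge\eps_0>0$, which holds for any two linearly independent directions in $\RR^2$. Finally: apply Theorem~\ref{cor-7-8-18} to get $w_r(G_0;\{\xi_1,\xi_2\})_p\le C(p,r,G)<\infty$, then peel off the finitely many corner pieces one at a time via Lemma~\ref{cor-7-3} with $\xi=\xi_1$ (each application multiplies the constant by a factor depending only on $p,r$ and the corner parameter $L$), obtaining after finitely many steps the bound \eqref{key-13-11}. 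The delicate point to verify carefully is the geometric claim that a single direction $\xi_1$ can be chosen transversal to all corners of $G$ at once, and that the complement of small corner-neighborhoods is genuinely Lip-2 with a uniform $\delta$; both are elementary planar convexity facts but deserve a clean lemma.
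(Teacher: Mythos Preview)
Your plan has a genuine gap, and it sits exactly at the point you flag as ``delicate.'' The claim that one can choose a single direction $\xi_1$ with $\pm\xi_1$ lying in every corner cone of $G$ is false already for a square (the four interior cones at the vertices are the four open coordinate quadrants, and no line through the origin meets more than two of them). Your parenthetical fallback---that two directions suffice to X-ray any planar convex body---is also false: the equilateral triangle has X-ray number $3$. Concretely, its three interior cones are pairwise disjoint $60^\circ$ arcs with centers $120^\circ$ apart, and since $\xi$ and $-\xi$ are $180^\circ$ apart, each pair $\{\pm\xi_i\}$ can meet at most one of the three cones; hence $\{\pm\xi_1,\pm\xi_2\}$ meets at most two. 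So your corner-by-corner scheme, which needs each corner piece to be a regular $(\pm\xi_i)$-directional domain in the sense of Definition~\ref{def-3-4}, cannot be executed with only two directions. A secondary issue is that the constants your outline produces depend on $G$ (through the Lip-2 parameter of $G_0$ in Theorem~\ref{cor-7-8-18}, the number of corners above your threshold, and the parameters of the corner pieces), whereas the theorem demands $C(p,r)$ independent of $G$; you write $C(p,r,G)$ yourself and do not indicate how to remove this.

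The paper's two proofs avoid corners entirely and are much shorter. Proof~A uses Besicovitch's theorem to inscribe an affine-regular hexagon; after the affine normalization permitted by~\eqref{1-6a}, $G$ is trapped between a fixed hexagon and a fixed circumscribed $12$-gon, and two applications of Lemma~\ref{cor-7-3} along the coordinate axes (with explicit parameters $L=2$ and $L=3$) finish. Proof~B takes $\xi_1$ along a diameter $[a,b]$ of $G$ and $\xi_2\perp\xi_1$: since $\|a-b\|=\diam(G)$, the body $G$ lies in the slab between the lines through $a$ and $b$ orthogonal to $\xi_1$, so after John normalization every point of $G$ is reached from a central strip $G_1$ by a segment in the $\xi_2$-direction, and $G_1$ itself is handled from the central square by Lemma~\ref{cor-7-3} in the $\xi_1$-direction. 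In both arguments the parameter $L$ is an absolute constant, which is what delivers $C(p,r)$. Neither proof uses Theorem~\ref{cor-7-8-18}.
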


We will present two short proofs. 

\begin{proof}[Proof A]
	A geometric result of Besicovitch~\cite{Be} asserts that it is possible to inscribe an affine image of a regular hexagon into any planar convex body. Since~\eqref{key-13-11} does not change under affine transform, we may assume the hexagon with the vertices $(\pm1,\pm1)$ and $(0,\pm2)$ (which are the vertices of a regular hexagon after a proper dilation along one of the coordinate axes) is inscribed into $G$, i.e. each vertex of the hexagon is on the boundary of $G$. Convexity of $G$ then implies that $G$ contains the hexagon and is contained in a non-convex $12$-gon (resembling the outline of the star of David) obtained by extending the sides of the hexagon, i.e. having additional nodes $(\pm2,0)$ and $(\pm1,\pm3)$. Choosing $\xi_1=(1,0)$ and $\xi_2=(0,1)$, the proof is now completed by two applications of Lemma~\ref{cor-7-3} for $S=[-1,1]^2$: first in the direction of $\xi_1$ with $L=2$, and second  in the direction of $\xi_2$ with $L=3$.
\end{proof}
\begin{proof}[Proof B]
	By John's theorem,  without loss of generality we may assume that  $B_1[0]\subset G\subset B_2[0]$. Let $a, b\in G$ be such that $\|a-b\|=\text{diam} (G)=L\leq 4$. Let $\xi_1$ be the unit vector in the direction of $b-a$ and $\xi_2$ the unit vector that is perpendicular to $\xi_1$.  Without loss of generality, we may assume that $\xi_1=(0,1)$ and  $\xi_2=(1,0)$, since otherwise we may consider a  rotation of $G$. Then 
	$a=(0, -L_1)$ and $b=(0, L_2)$ for some $L_1, L_2\ge 1$ such that $L_1+L_2=L$, and   $S:=[-\f 1{\sqrt{2}}, \f 1{\sqrt{2}}]^2\subset B_1[0]\subset G$.  Let    $G_1:=( [ -\f 1{\sqrt{2}}, \f 1{\sqrt{2}}]\times [-L_1, L_2])\cap G$.  Since $G$ is convex, the set $G_1$ satisfies the conditions of
	Corollary \ref{cor-7-3} in  the direction of $e_2$. Thus, using   Theorem  \ref{cor-7-3} and Remark \ref{rem-7-4}, 
	we get 
	$$E_{r-1}(f)_{L^p(G_1)} \leq C(p,r) \og^r(f, G_1;  \{e_1, e_2\})_p.$$	
	On the other hand, 
	since $\diam(G)=\|a-b\|=L_1+L_2$, it follows that  for each $(x, y)\in G$,   $-L_1\leq y \leq L_2$, which implies that   $(0,y)\in G_1$, and $[(0,y), (x, y)]\subset G$.  Thus, applying
	Remark \ref{rem-7-4} in the direction of $e_1$ with $K=G_1$, we obtain  the desired inequality \eqref{key-13-11}. 
\end{proof}

\begin{rem}
	Proof~A is a bit shorter and provides explicit small values of the parameter $L$ from Lemma~\ref{cor-7-3} which would result in a better value of the constant in~\eqref{key-13-11}. We are unaware of any algorithms for construction of an inscribed affine-regular hexagon into given planar convex body, although the variational proof from~\cite{Iu}*{Th.~2.3} may be a basis for such an algorithm. Proof~B uses John's theorem, i.e. the largest area ellipse inscribed into the convex body. This ellipse can be constructed algorithmically, see, e.g.~\cite{Co}, \cite{Da}, \cite{Su} and references therein.
\end{rem}

The problem of finding the exact value of the number $\cN_d$ for $d\ge 3$  appears to be more challenging. The proofs of Theorem ~\ref{cor-7-7} above cannot be immediately extended to  the higher-dimensional convex bodies.

 For the reminder of this section, we assume that $d\ge 3$.  Our aim  is to  relate  the directional Whitney inequality to  the concept of X-raying from convex geometry, which allows us to deduce improved  upper estimates of  $\cN_d$   for $d\ge 3$ from  certain  known results on X-ray  numbers of convex bodies.

 We need to introduce some definitions from convex geometry, which can be found in \cite{Be-survey} and \cite{Be-Xray}.
 Let $G$ be a convex body in $\RR^d$.  We say a point $x$ on the boundary   of  $G$ is illuminated in   the direction ${e}\in\SS^{d-1}$ if the ray $\{x+t{e}:t\ge0\}$ intersects the interior  of $G$.
 A set of directions $\EEE\subset \SS^{d-1}$ is said to illuminate $G$ if every boundary  point of $G$ is illuminated in a direction from the set $\CE$.  One can consider illumination of $G\subset\RR^d$  by parallel beams of light.  The illumination number $I_d(G)$ of $G$  is the smallest $n\in\NN$  for which $G$ can be illuminated by a set of $n$ directions.
Next, we say  $G$ is X-rayed by a set of directions  $\EEE\subset \SS^{d-1}$ if $G$ is illuminated by the set  $\EEE\cup(-\EEE)$, and define the  X-ray number $X_d(G)$ of $G$  to be the smallest number $n\in\NN$ for which $G$ can be  X-rayed by a set of $n$ directions. By the definition, we have \begin{equation}\label{7-4}
X_d(G)\leq I_d(G) \leq 2 X_d(G).
\end{equation}
Finally, for $d\ge 3$,  we say  $G\subset\RR^d$ is   almost smooth if for each  boundary point  $x\in\p G$, and  any  two outer unit normal vectors $u_1$, $u_2$ of the supporting hyperplane(s) of $G$ at $x$,   the inequality   $u_1\cdot u_2\ge (d-2)/(d-1)$ holds.

In this section, we will prove the following result:

 \begin{thm}\label{thm:xray}
 	If  $G\subset\R^d$ is a convex body in $\R^d$ X-rayed by a finite set of directions $\EEE\subset \SS^{d-1}$, then  for any $r\in\NN$ and $0<p\leq \infty$,
 	\begin{equation}\label{eqn:ineq for enlarged}
 	w_{r}  (G;\EEE)_{p} \leq C_{p, r, G}<\infty.
 	\end{equation}
 	In particular, this implies that for every convex body $G\subset \RR^d$,
 	\begin{equation}\label{7-6}
 	\cN_d(G)\le  X_d(G).
 	\end{equation}
 \end{thm}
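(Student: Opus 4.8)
\textbf{Proof plan for Theorem~\ref{thm:xray}.}
The plan is to reduce the global estimate on $G$ to the local structure provided by the X-raying set $\CE$, using the regular directional domains of Definition~\ref{def-3-4} as the intermediate objects and Lemma~\ref{cor-7-3} (together with Lemma~\ref{rem:whitney key tool seq appl}) as the gluing mechanism. First I would fix a finite set $\CE=\{\pm\xi_1,\dots,\pm\xi_n\}$ (by~\eqref{1-6-00} we may symmetrize it) that X-rays $G$, so that $\CE$ itself illuminates $G$. The key geometric observation is that since every boundary point of $G$ is illuminated by some $\xi\in\CE$ and the boundary is compact, one can partition $\partial G$ into finitely many relatively open pieces $\Gamma_1,\dots,\Gamma_m$, where each $\Gamma_k$ is illuminated in a single direction $\xi_{j(k)}\in\CE$ \emph{uniformly}, i.e. there is an angular margin $\eps_0>0$ so that every point of $\overline{\Gamma_k}$ is illuminated by the whole cone of directions within angle $\eps_0$ of $\xi_{j(k)}$. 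Associated to each such piece, the portion of $G$ lying between $\Gamma_k$ and a fixed ``core'' region — say a box $S_0$ obtained from John's theorem — can be realized as a regular $\xi_{j(k)}$-directional domain with a parameter $L$ controlled by $\diam(G)$, the inradius of $G$, and $\eps_0$. Here the uniform illumination margin is exactly what guarantees that the two bounding functions $g_1,g_2$ in~\eqref{3-6} are defined on a common base and that the enclosing slab $S_L$ of~\eqref{3-8} has finite thickness ratio $L$.

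Next I would set up the decomposition $G=\bigcup_{k=0}^m E_k$ with $E_0$ a coordinate box inside $G$ (so that $w_r(E_0;\CE_0)_p<\infty$ by Lemma~\ref{lem-5-3-0}, where $\CE_0$ are the edge directions of the box — chosen, by an affine normalization as in Proof~B of Theorem~\ref{cor-7-7}, to be among the directions we are allowed to use, or else absorbed later via Theorem~\ref{thm:enlarged set of directions}), and each $E_k$ for $k\ge 1$ the regular directional domain attached to $\Gamma_k$, arranged so that its base $S$ sits inside $E_0\subset \Og_{k-1}$. Then I apply Lemma~\ref{cor-7-3} iteratively to the pairs $(G,K)=(E_k,\Og_{k-1})$: at each step
\[
w_r(\Og_k;\CE)_p\le C_{p,r}L^{\,r-1+2/p}\bigl(1+w_r(\Og_{k-1};\CE)_p\bigr),
\]
and after $m$ steps one obtains $w_r(G;\CE)_p\le C(p,r,G)<\infty$, which is~\eqref{eqn:ineq for enlarged}. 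The estimate~\eqref{7-6} is then immediate: taking $\CE$ to be an X-raying set of minimal cardinality $X_d(G)$ (enlarged if necessary to span $\RR^d$, which for $d\ge 2$ costs nothing since an X-raying set already spans), the inequality $w_r(G;\CE)_p<\infty$ holds for all $r,p$, so $\cN_d(G)\le X_d(G)$.

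\textbf{Main obstacle.} The routine parts are the iterative bookkeeping and the reduction via Lemma~\ref{cor-7-3}; the real work is the geometric lemma asserting that an X-rayed (equivalently, illuminated by $\CE$) convex body decomposes into finitely many regular directional domains with a \emph{uniform} parameter $L$ and whose bases all lie in a common core box. The subtlety is twofold: (i) upgrading pointwise illumination to a uniform angular margin $\eps_0>0$ on a closed boundary piece — this needs a compactness argument exploiting that the set of directions illuminating a given boundary point is open; and (ii) checking that the resulting ``fibers'' $\{x+t\xi_{j(k)}: t\ge 0\}\cap G$ genuinely give the graph form~\eqref{3-6} over a fixed base $D$, i.e. that the projection of $\Gamma_k$ along $\xi_{j(k)}$ behaves well — here convexity of $G$ is essential, since it forces each line in direction $\xi_{j(k)}$ to meet $G$ in a single segment, and the lower/upper endpoints $g_1,g_2$ are then automatically measurable (indeed Lipschitz on the interior of $D$). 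Once this geometric decomposition is in hand, the analytic estimate follows mechanically from the lemmas already proved.
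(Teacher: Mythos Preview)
Your iterative strategy has the right shape, but the geometric decomposition has a genuine gap. You assert that each outer piece $E_k$ is a regular $\xi_{j(k)}$-directional domain whose base slab sits inside the John box $E_0$. Illumination of $x\in\Gamma_k$ by $\xi_{j(k)}$, however, only guarantees that the ray $x+t\xi_{j(k)}$ enters $G^\circ$; it does \emph{not} guarantee that this ray meets $E_0$, nor that the projection $D_k$ of $\Gamma_k$ along $\xi_{j(k)}$ lies inside the projection of $E_0$. Concretely, for a simplex one can choose the X-raying set so that the direction in $\CE$ illuminating a given vertex $v$ is nearly parallel to an edge through $v$; then rays from a neighborhood of $v$ hug that edge and miss the inscribed ball entirely, so no slab over $D_k$ can be placed inside $E_0$. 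The angular margin you extract by compactness controls the transversality of $\xi_{j(k)}$ to $\partial G$ along $\Gamma_k$, but says nothing about reaching a fixed small core. In short, a small core cannot serve as a common base for all the directional pieces, and Lemma~\ref{cor-7-3} cannot be invoked as you propose.

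The paper's proof fixes this by taking the core large and by changing the gluing tool. It builds an increasing sequence of open convex $C^2$ bodies $S_n\uparrow G^\circ$ and proves, via a Cantor-intersection argument, that for some $n_0$ one has $G\subset\bigcup_{e\in\CE}R(S_{n_0},e)$ where $R(S,e):=\{x-te:x\in S,\,t\ge0\}$; that is, every point of $G$ reaches the core $S_{n_0}$ along some X-ray direction. This compactness step genuinely requires the core to exhaust $G^\circ$ --- no fixed parallelepiped can play this role. The outer shells $G_e=R(S_{n_0},e)\cap G$ are then attached not via Lemma~\ref{cor-7-3} (they are not regular directional domains: their fibers degenerate at the edge of the projection of $S_{n_0}$) but by the elementary slicing Lemma~\ref{rem:whitney key tool seq appl}, cutting each $G_e$ into parallel slabs of fixed thickness along $e$. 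The large $C^2$ core $\overline{S_{n_1}}$ is finally handled by Theorem~\ref{cor-7-8-18} (any $d$ independent directions suffice on a Lip-$2$ domain) together with Theorem~\ref{thm:enlarged set of directions}; this is the missing ingredient that lets the argument close.
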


 We postpone the proof of Theorem ~\ref{thm:xray} until Subsection \ref{subsec-7-1}.    For the moment, we
  take it  for granted and prove  a few useful corollaries for $d\ge 3$.
 First, it was shown in \cite{Be-Xray}*{Th.~2.3} that $X_d(G)=d$ for  every  almost smooth convex body $G\subset\RR^d$. This combined with  Theorem~\ref{thm:xray}  implies

 \begin{cor}\label{thm:C2 all dirs strong}
 	If $d\ge 3$ and $G$ is an  almost smooth convex body  in $\RR^d$, then  $\cN_d(G)=d$.
 \end{cor}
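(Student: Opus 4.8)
The plan is to combine two results that are already in hand: Theorem~\ref{thm:xray}, which gives $\cN_d(G)\le X_d(G)$ for every convex body $G\subset\RR^d$, and the convex-geometric fact from \cite{Be-Xray}*{Th.~2.3} that $X_d(G)=d$ whenever $G$ is an almost smooth convex body in $\RR^d$. Putting these together yields $\cN_d(G)\le d$. The matching lower bound $\cN_d(G)\ge d$ holds for \emph{every} convex body in $\RR^d$, simply because the definition of $\cN_d(G)$ requires the chosen set of directions $\CE$ to satisfy $\spn(\CE)=\RR^d$, which already forces $|\CE|\ge d$. Hence $\cN_d(G)=d$.

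In more detail, the first step is to invoke \cite{Be-Xray}*{Th.~2.3} to fix a set $\CE\subset\SS^{d-1}$ of exactly $d$ directions that X-rays $G$; since these $d$ directions X-ray a body with nonempty interior, they cannot all lie in a common hyperplane, so $\spn(\CE)=\RR^d$. The second step is to apply Theorem~\ref{thm:xray} to this $\CE$, which gives $w_r(G;\CE)_p\le C_{p,r,G}<\infty$ for all $r\in\NN$ and $0<p\le\infty$. This is precisely the condition in the definition of $\cN_d(G)$, so $\cN_d(G)\le|\CE|=d$. The third and final step is the trivial lower bound: any admissible $\CE$ in the definition of $\cN_d(G)$ spans $\RR^d$, hence has at least $d$ elements, so $\cN_d(G)\ge d$.

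There is essentially no obstacle here; the corollary is a direct packaging of the two quoted inputs, and all the real work has already been done in the proof of Theorem~\ref{thm:xray} (which in turn rests on Lemma~\ref{cor-7-3} and the iteration machinery of Section~\ref{sec:prliminary}) and in the cited X-ray estimate of Bezdek. If anything needs a word of care, it is only the remark that $d$ X-raying directions must span $\RR^d$; this is immediate because a convex body of nonempty interior has boundary points whose outer normals span $\RR^d$, and a direction confined to a hyperplane cannot illuminate (from either side) a boundary point whose normal is orthogonal to that hyperplane. Thus the proof is a two-line deduction, and I would present it as such.
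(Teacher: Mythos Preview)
Your proposal is correct and matches the paper's argument exactly: the paper derives the corollary in one line from Theorem~\ref{thm:xray} (giving $\cN_d(G)\le X_d(G)$) together with \cite{Be-Xray}*{Th.~2.3} (giving $X_d(G)=d$), plus the trivial lower bound $\cN_d(G)\ge d$. Your extra remark that an X-raying set must span $\RR^d$ is already established inside the proof of Theorem~\ref{thm:xray}, so the inequality $\cN_d(G)\le X_d(G)$ in \eqref{7-6} can be quoted directly without re-verifying it.
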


 Next, we define
 $$X_d:=\sup_{G} X_d(G),$$
 where the supremum is taken over all convex bodies $G$ in $\RR^d$.
 For the maximal  X-ray number $X_d$, it is conjectured and  proved only for $d=2$ that $X_d\le 3\cdot 2^{d-2}$
  (see~\cite{Be-Xray} and references therein).
  A better studied  concept is the illumination number, which is related to the  X-ray number  via \eqref{7-4}.   The well-known illumination conjecture in convex geometry states (\cite{Be-survey})   that $$I_d:=\sup_{G} I(G)\le 2^d,$$
  where  the supremum is  taken over all convex bodies $G\subset \RR^d$.
    Again, this conjecture  was proved for $d=2$ only (see~\cite{Be-survey}). Let us provide a summary of   known upper bounds for  $I_d$:
    \begin{itemize}
    	\item  By~\cite{Pa}, $I_3\le 16$.
    \item 	The estimates $I_4\le 96$, $I_5\le 1091$ and $I_6\le 15373$ were recently obtained in~\cite{Pr-Sh}.
    \item  For  $d\ge 7$, the following  explicit bound follows from the results of Rogers and Shepard ~\cites{Ro,Ro-Sh} (see also~\cite{Be}*{Section~2.2}):
     $$I_d\le \binom{2d}{d}d(\ln d+\ln\ln d+5),$$
     where $5$ can be replaced with $4$ for sufficiently large $d$.
    \item The following  remarkable asymptotic estimate of $I_d$  with an implicit  constant $c_0>0$
      was obtained  very recently in~\cite{HSTV}:
    $$I_d\le\binom{2d}{d} e^{-c_0\sqrt{d}}.$$
    \end{itemize}

     Using \eqref{7-6}, \eqref{7-4} and these  known bounds for $I_d$, we obtain
 \begin{cor}
 For $d\ge 3$, we have
 	\begin{equation*}
 	\cN_d \le \begin{cases}
 	16, & \text{if }d=3,\\
 	96, & \text{if }d=4,\\
 	1091, & \text{if }d=5,\\
 	15373, & \text{if }d=6,\\
 	\binom{2d}{d} \min\{d(\ln d+\ln\ln d+5),e^{-c_0\sqrt{d}}\}, & \text{if }d\ge7.
 	\end{cases}
 	\end{equation*}
 \end{cor}

%


\subsection{Proof of Theorem ~\ref{thm:xray}}\label{subsec-7-1}

\begin{proof}Without loss of generality, we may assume that $\EEE=-\EEE$ and  $0\in G^\circ$, where $G^\circ$ denotes the interior of $G$.  For a direction ${e}\in\SS^{d-1}$, we define
	\begin{align*}
	R(S,{e}):&=\{x-t{e}:x\in S,\,t\ge0 \}\   \ \text{for any  $S\subset G$},\\
	\vi(x,e):&=\max\{ t\ge 0:\  \ x-te\in G\}\  \ \text{for any  $x\in G$}.
	\end{align*}
	Clearly, $0\leq \vi(x,e)\leq \diam(G)$, and
	$$R(S,e)\cap G=\{x-te:\  \ x\in S,\  \ 0\leq t\leq \vi(x,e)\}\   \ \text{for any $S\subset G$}.$$

We shall use the following known result from convex geometry  (see  ~\cite{Ha}):  there exists   an increasing  sequence $\{S_n\}_{n=1}^\infty$ of open, convex and $C^2$-subsets of $G$  such that $G^\circ=\bigcup_{n= 1}^\infty S_n$,
	 \begin{equation}\label{7-3}
	 \Bl(1-\f 1 {n+1}\Br) G \subset S_n \subset \Bl(1-\f 1{n+2}\Br) G,\   \ n=1,2,\dots.
	 \end{equation}
	   (Indeed, one can choose each $S_n$  to be an algebraic domain.)	
We break the rest of the proof into several steps.

First, we claim that there exists a positive integer  $n_0$ such that
	\begin{equation}\label{eqn:RS cover}
	G\subset \bigcup_{{e}\in\EEE} R(S_{n_0},{e}).
	\end{equation}
	Assume otherwise  that for each $n$,
	\begin{equation*}
	T_n:=G\setminus \left(\bigcup_{{e}\in\EEE} R(S_n,{e})\right)\neq \emptyset.
	\end{equation*}
	 Since  $\{T_n\}_{n\ge 1}$ is a decreasing sequence of closed subsets of $G$,  by Cantor's intersection theorem, there exists a point $x\in \bigcap_{n=1}^\infty T_n$. Since $S_n\subset R(S_n,{e})$ for any ${e}\in \sph$ and $\bigcup_{n= 1}^\infty S_n= G^\circ=G\setminus \partial G$, we have $x\in\partial G$. Thus,  $x$ must be illuminated in a direction ${e}\in\EEE$, which means that there exists $t\ge 0 $ and $e\in\CE$ such that   $x+t{e}\in G^\circ=\bigcup_{n=1}^\infty S_n$. This implies that  $x\in R(S_n,{e})$ for some $n$  and direction $e\in\CE$,
	 which  is impossible as  $$x\in T_n=G\setminus \left(\bigcup_{{e}\in\EEE} R(S_n,{e})\right),\  \ \forall n\in\NN.$$
	 This proves the claim \eqref{eqn:RS cover}.

	Next, we   set $n_1=n_0+2$ and
	$$G_e: =R(S_{n_0}, e) \cap G,\   \ e\in\CS.$$	
	We prove that for each direction  $e\in\CE$ and  measurable set $S_{n_1} \subset S \subset G$,
	\begin{equation}\label{7-5}
	w_r(G_e\cup S;\CE)_p \leq C(p,r,G) \Bl( 1+w_r(S;\CE)_p\Br).
	\end{equation}

	Indeed,  by \eqref{7-3},
	$$ \overline{S_{n_0}} \subset \Bl(1-\f 1 {n_0+2} \Br) G\subset \Bl(1-\f 1 {n_0+3} \Br) G^\circ \subset S_{n_1}.$$
	Thus, 	there exists  $\delta>0$ such that
	\begin{equation}\label{eqn:N_1 def}
	\overline{S_{n_0}}+B_{r\delta}[0]\subset S_{n_1}.
	\end{equation}
	Now  we define
	\begin{align*}
	K_{e,0}:&=\{ x-te:\  \ x\in S_{n_0},\  \ 0\leq t\leq r\da\},\\
	 K_{e,j}:&=\Bl\{x-te:\  \ x\in S_{n_0},\
	  (r+j-1)\da \leq t \leq \min\{ (r+j) \da,  \vi(x,e)\}\Br\},\\
	  &  \hspace{5cm} \  \ j=1,2,\dots, \ell_e,
	\end{align*}
	where $\ell_e\leq \f {\diam \ G} \da$ is the largest integer such that $(r+\ell_e-1)\da \leq \vi(x,e)$ for some $x\in S_{n_0}$.  By \eqref{eqn:N_1 def}, $K_{e,0}\subset S_{n_1}\subset G$, and thus, for  $S_{n_1}\subset S\subset G$,
	$$G_e\cup S=S \cup \Bl(\bigcup_{j=1}^{\ell_e} K_{e, j}\Br).$$
Furthermore, for   $h=\da e$ and each $1\leq j\leq \ell_e$, we have
	 \begin{align*}
	 \bigcup_{i=1}^r ( K_{e,j} -ih)\subset S \cup \Bl( \bigcup_{v=1}^{j-1} K_{e,v}\Br).
	 \end{align*}
	The estimate \eqref{7-5} then follows by 	 Lemma  \ref{rem:whitney key tool seq appl}.
	
	Third, we show that \begin{equation}\label{7-11}
	w_r(G;\CE)_p \leq C \Bl( 1+w_r(\overline{S_{n_1}};\CE)_p\Br).
	\end{equation}
	
	Let $\CE=\{\xi_j\}_{j=1}^m$. By \eqref{eqn:RS cover}, we have
		$$ G=\bigcup_{j=1}^m \Bl( R(S_{n_0}, \xi_j) \cap G\Br)=\bigcup_{j=0}^m G_j,$$
	where  $$G_0=\overline{S_{n_1}}\  \ \text{ and}\  \ G_j ={G_{\xi_j}} \cup S_{n_1}\  \ \text{ for}\  \ 1\leq j\leq m. $$
Using  \eqref{7-5} with $S=\bigcup_{j=0}^{k-1} G_j$  iteratively for $k=1,2,\dots, m $, we obtain
\begin{align*}
w_r\Bl( \bigcup_{j=0}^k G_j;\CE)_p \leq C \Bl( 1+ w_r \bigl( \bigcup_{j=0}^{k-1} G_j;\CE)_p\Br),\  \ k=1,2,\dots, m.
\end{align*}
Thus,
\begin{align*}
w_r(G;\CE)_p =w_r\Bl( \bigcup_{j=0}^m G_j; \CE\Br)_p \leq C \Bl( 1+w_r(\overline{S_{n_1}};\CE)_p\Br).
\end{align*}	
	
Finally, we show that
$$	w_r(\overline{S_{n_1}};\CE)_p\leq C_{p,r,G}<\infty,$$
which combined with \eqref{7-11}  will imply the desired estimate $w_r(G;\CE)_p<\infty$.
Since $\overline{S_{n_1}}$ is a convex $C^2$-domain, by Theorem ~\ref{cor-7-8-18} and Theorem  ~\ref{thm:enlarged set of directions}, it suffices to show that $\spn(\CE)=\RR^d$.

Assume otherwise. Then  there exists a direction $\eta\in\SS^{d-1}$ orthogonal to any vector in $\EEE$. Let $H$ be one of the two supporting hyperplanes to $G$ orthogonal to $\eta$. Then any point of $H\cap G$ cannot be illuminated by any direction of $\EEE$, obtaining a contradiction.

This completes the proof of Theorem ~\ref{thm:xray}.
\end{proof}

{\bf Acknowledgment.} The authors are grateful to the anonymous referee for the comments that improved the paper.

\begin{bibsection}
	\begin{biblist}

\bib{Be}{article}{
	author={Besicovitch, A. S.},
	title={Measure of asymmetry of convex curves},
	journal={J. London Math. Soc.},
	volume={23},
	date={1948},
	pages={237--240},
}

\bib{Be-survey}{article}{
	author={Bezdek, K.},
	author={Khan, M. A.},
	title={The geometry of homothetic covering and illumination},
	conference={
		title={Discrete geometry and symmetry},
	},
	book={
		series={Springer Proc. Math. Stat.},
		volume={234},
		publisher={Springer, Cham},
	},
	date={2018},
	pages={1--30},
}

\bib{Be-Xray}{article}{
	author={Bezdek, K.},
	author={Kiss, Gy.},
	title={On the X-ray number of almost smooth convex bodies and of convex
		bodies of constant width},
	journal={Canad. Math. Bull.},
	volume={52},
	date={2009},
	number={3},
	pages={342--348},
}

\bib{BPR}{article}{
	author={Bondarenko, A. V.},
	author={Prymak, A.},
	author={Radchenko, D.},
	title={On concentrators and related approximation constants},
	journal={J. Math. Anal. Appl.},
	volume={402},
	date={2013},
	number={1},
	pages={234--241},
}

\bib{Bre-Scot}{book}{
	author={Brenner, S. C.},
	author={Scott, L. R.},
	title={The mathematical theory of finite element methods},
	series={Texts in Applied Mathematics},
	volume={15},
	publisher={Springer-Verlag, New York},
	date={1994},
	pages={xii+294},
}

\bib{Bru}{article}{
	author={Brudnyi, Y.A.},
	title={On a theorem of local best approximations},
	language={Russian},
	journal={Kazan. Gos. Univ. Uchen. Zap.},
	volume={124},
	date={1964},
	number={kn. 6},
	pages={43--49},}

\bib{Bru2}{article}{
	author={Brudnyi, Ju. A.},
	title={A multidimensional analogue of a certain theorem of Whitney},
	language={Russian},
	journal={Mat. Sb. (N.S.)},
	volume={82 (124)},
	date={1970},
	pages={175--191},
}

\bib{Bru-Kalton}{article}{
	author={Brudnyi, Y. A.},
	author={Kalton, N. J.},
	title={Polynomial approximation on convex subsets of ${\bf R}^n$},
	journal={Constr. Approx.},
	volume={16},
	date={2000},
	number={2},
	pages={161--199},
}

\bib{Co}{article}{
	author={Cohen, Michael B.},
	author={Cousins, Ben},
	author={Lee, Yin Tat},
	author={Yang, Xin},
	title = {A near-optimal algorithm for approximating the John Ellipsoid},
	booktitle = {Proceedings of the Thirty-Second Conference on Learning Theory}, pages = {849--873}, year = {2019}, editor = {Alina Beygelzimer and Daniel Hsu}, volume = {99}, series = {Proceedings of Machine Learning Research}, address = {Phoenix, USA}, month = {25--28 Jun}, publisher = {PMLR},
	note = {http://proceedings.mlr.press/v99/cohen19a.html}
}

\bib{Da}{article}{
	author={Ahipasaoglu, S. Damla},
	author={Sun, Peng},
	author={Todd, Michael J.},
	title={Linear convergence of a modified Frank-Wolfe algorithm for
		computing minimum-volume enclosing ellipsoids},
	journal={Optim. Methods Softw.},
	volume={23},
	date={2008},
	number={1},
	pages={5--19},
}

\bib{De-Le}{article}{
	author={Dekel, S.},
	author={Leviatan, D.},
	title={Whitney estimates for convex domains with applications to
		multivariate piecewise polynomial approximation},
	journal={Found. Comput. Math.},
	volume={4},
	date={2004},
	number={4},
	pages={345--368},
}

\bib{De-Lo}{book}{
	author={DeVore, R. A.},
	author={Lorentz, G. G.},
	title={Constructive approximation},
	series={Grundlehren der Mathematischen Wissenschaften [Fundamental
		Principles of Mathematical Sciences]},
	volume={303},
	publisher={Springer-Verlag, Berlin},
	date={1993},
	pages={x+449},
}


\bib{Di96}{article}{
	author={Ditzian, Z.},
	title={Polynomial approximation in $L_p(S)$ for $p>0$},
	journal={Constr. Approx.},
	volume={12},
	date={1996},
	number={2},
	pages={241--269},
}


%

\bib{Di-Iv}{article}{
	author={Ditzian, Z.},
	author={Ivanov, K. G.},
	title={Minimal number of significant directional moduli of smoothness},
	language={English, with Russian summary},
	journal={Anal. Math.},
	volume={19},
	date={1993},
	number={1},
	pages={13--27},
}

\bib{Di-Pr08}{article}{
	author={Ditzian, Z.},
	author={Prymak, A.},
	title={Ul$\prime$yanov-type inequality for bounded convex sets in $R^d$},
	journal={J. Approx. Theory},
	volume={151},
	date={2008},
	number={1},
	pages={60--85},
}

\bib{DU}{article}{
	author={Dung, D.},
	author={Ullrich, T.},
	title={Whitney type inequalities for local anisotropic polynomial
		approximation},
	journal={J. Approx. Theory},
	volume={163},
	date={2011},
	number={11},
	pages={1590--1605},
}

\bib{GKS}{article}{
	author={Gilewicz, J.},
	author={Kryakin, Yu. V.},
	author={Shevchuk, I. A.},
	title={Boundedness by 3 of the Whitney interpolation constant},
	journal={J. Approx. Theory},
	volume={119},
	date={2002},
	number={2},
	pages={271--290},
}	

\bib{Ha}{article}{
	author={Hammer, P. C.},
	title={Approximation of convex surfaces by algebraic surfaces},
	journal={Mathematika},
	volume={10},
	date={1963},
	pages={64--71},
}

\bib{HSTV}{article}{
	author={Huang, H.},
	author={Slomka, B. A.},
	author={Tkocz, T.},
	author={Vritsiou, B.-H.},
	title={Improved bounds for Hadwiger's covering problem via thin-shell estimates},
	note={http://arxiv.org/abs/1811.12548}
}

\bib{Iu}{thesis}{
	author={Iurchenko, Ivan},
	title={Properties of extremal convex bodies},
	type={M.Sc. thesis},
	organization={University of Manitoba},
	date={2012},
	note={http://hdl.handle.net/1993/9155}
}

%
%

\bib{IT}{article}{
	author={Ivanov, K. G.},
	author={Takev, M. D.},
	title={$O(n\,{\rm ln}\,n)$ bound for Whitney constants},
	journal={C. R. Acad. Bulgare Sci.},
	volume={38},
	date={1985},
	number={9},
	pages={1129--1131},
}


\bib{Jo}{article}{
	author={John, Fritz},
	title={Extremum problems with inequalities as subsidiary conditions},
	conference={
		title={Studies and Essays Presented to R. Courant on his 60th
			Birthday, January 8, 1948},
	},
	book={
		publisher={Interscience Publishers, Inc., New York, N. Y.},
	},
	date={1948},
	pages={187--204},
}

\bib{Pa}{article}{
	author={Papadoperakis, I.},
	title={An estimate for the problem of illumination of the boundary of a
		convex body in $E^3$},
	journal={Geom. Dedicata},
	volume={75},
	date={1999},
	number={3},
	pages={275--285},
}

\bib{Pr-Sh}{article}{
	author={Prymak, A.},
	author={Shepelska, V.},
	title={On illumination of the boundary of a convex body in $\mathbb{E}^n$, $n=4,5,6$},
	note={http://arxiv.org/abs/1811.08962}
}

\bib{Ro}{article}{
	author={Rogers, C. A.},
	title={A note on coverings},
	journal={Mathematika},
	volume={4},
	date={1957},
	pages={1--6},
}

\bib{Ro-Sh}{article}{
	author={Rogers, C. A.},
	author={Shephard, G. C.},
	title={The difference body of a convex body},
	journal={Arch. Math. (Basel)},
	volume={8},
	date={1957},
	pages={220--233},
}

\bib{Sen}{article}{
	author={Sendov, Bl.},
	title={The constants of H. Whitney are bounded},
	journal={C. R. Acad. Bulgare Sci.},
	volume={38},
	date={1985},
	number={10},
	pages={1299--1302},
}

\bib{Stor}{article}{
	author={Storozhenko, E. A.},
	title={Approximation by algebraic polynomials of functions of the class
		$L^{p}$, $0<p<1$},
	language={Russian},
	journal={Izv. Akad. Nauk SSSR Ser. Mat.},
	volume={41},
	date={1977},
	number={3},
	pages={652--662, 718},}

\bib{Su}{article}{
	author={Sun, Peng},
	author={Freund, Robert M.},
	title={Computation of minimum-volume covering ellipsoids},
	journal={Oper. Res.},
	volume={52},
	date={2004},
	number={5},
	pages={690--706},
}

\bib{Wh}{article}{
	author={Whitney, H.},
	title={On functions with bounded $n$th differences},
	journal={J. Math. Pures Appl. (9)},
	volume={36},
	date={1957},
	pages={67--95},
}

%

%
%
%

	\end{biblist}
\end{bibsection}

\end{document}